\theoremstyle{plain}
  \newtheorem{theorem}[subsection]{Theorem}
  \newtheorem{proposition}[subsection]{Proposition}
  \newtheorem{lemma}[subsection]{Lemma}
\theoremstyle{remark}
  \newtheorem{remark}[subsection]{Remark}
\theoremstyle{definition}
  \newtheorem{definition}[subsection]{Definition}
\begin{document}

\title[]
{The linear profile decomposition for the fourth order Schr\"odinger equation}%
\author{Jin-Cheng Jiang}
\address{Institute of Mathematics, Academic Sinica, Taipei, Taiwan 11529, R.O.C}
\email{jiang@math.sinica.edu.tw}
\author{Benoit Pausader}
\address{Mathematics Department,
Box 1917,
Brown University,
Providence, RI 02912}
\email{Benoit.Pausader@math.brown.edu}

\author{Shuanglin Shao}
\address{Institute for Mathematics and its applications, University of Minnesota, Minneapolis, MN 55455}
\email{slshao@ima.umn.edu}

\date{\today}
\begin{abstract}
In this paper, we establish the linear profile decomposition for the one dimensional fourth order
Schr\"{o}dinger equation $$
\begin{cases}
iu_t-\mu\Delta u+\Delta^2u=0\;,\;t\in\mathbb{R},\,x\in\mathbb{R},\\
u(0,x)=f(x)\in L^2,
\end{cases}$$where $\mu\ge 0$.  As an application, we establish a dichotomy result on the existence of extremals to the symmetric Schr\"{o}dinger Strichartz inequality.
\end{abstract}

\maketitle
\section{Introduction}
\subsection{Linear profile decomposition}
In this paper, we consider the problem of the linear profile decomposition for the fourth order
 Schr\"odinger equation of the following form with $L^2$ data in one spatial dimension
\begin{equation}\label{eq:4thNLS}
\begin{cases}
iu_t-\mu\Delta u+\Delta^2u=0\;,\;t\in\mathbb{R},\,x\in\mathbb{R},\\
u(0,x)=f(x)\in L^2,
\end{cases}\end{equation}
where $u:\mathbb{R}\times \mathbb{R}\rightarrow \mathbb{C}$ and $\mu\ge 0$\footnote{The case $\mu<0$ is intentionally not included due to lack of a refinement of Strichartz inequality, cf.  the inequality \eqref{eq:refined-strichartz} when $\mu\ge 0$. Moreover, the global Strichartz estimate may not be available in view of the presence of the degenerate critical point for the phase function, see e.g. \cite[Condition (2.1.c)]{Kenig-Ponce-Vega:1991:dispersive-estimates} or  \cite[(10)]{BenArtzi-Koch-Saut:2000:dispersive-estimate-for-4th-NLS}.}. Equation \eqref{eq:4thNLS} is the free form of one dimensional fourth-order nonlinear Schr\"odinger equations that have been introduced by Karpman \cite{Karpman:1996} and Karpman-Shagalov \cite{Karpmam-Shagalov:2000} to take into account the role of ``fourth-order dispersion" in the propagation of intense laser beams in a bulk medium with Kerr nonlinearity.

The main result in this paper, the linear profile decomposition for Equation \eqref{eq:4thNLS}, is motivated by the analogous decompositions in context of wave, Schr\"odinger and Airy equations  \cite{Bahouri-Gerard:1999:profile-wave, Begout-Vargas:2007:profile-schrod-higher-d, Carles-Keraani:2007:profile-schrod-1d, Keraani:2001:profile-schrod-H^1, Merle-Vega:1998:profile-schrod, Shao:2008:linear-profile-Airy-Maximizer-Airy-Strichartz}, and their successful applications in attacking the global wellposedness and scattering problems at mass- or energy- critical level \cite{Kenig-Merle:2006:focusing-energy-NLS-radial, Kenig-Merle:2007:focusing-energy-nonlinear-Wave, Kenig-Merle:2008:focusing-energy-nonlinear-Wave, Killip-Kwon-Shao-Visan:2008:minimal-mass-blow-up-solution-to-critical-gKdV, Killip-Tao-Visan:2008:cubic-NLS-radial, Killip-Visan:2008:focusing-energy-critical-NLS-higher-d, Killip-Visan-Zhang:2008:radial-NLS-hihger-d, Pausader-Shao:2009:GWP-L2-critical-4NLS-high-dimensions, Tao-Visan-Zhang:2007:radial-NLS-higher}. Roughly speaking, the profile decomposition investigates the general structure of a sequence of solutions to \eqref{eq:4thNLS} and aims to compensate for the loss of compactness of the solution operator caused by the natural symmetries of the equation. By passing to a subsequence, a sequence of solutions is expected to be written as a summation of the superposition of concentrating waves and a remainder (see Theorem \ref{thm:linear-profile}). The concentrating waves are referred to as ``profiles", which encode certain symmetry information of the equation and are orthogonal in some sense (see Remark \ref{re:profiles-orthogonal}); the remainder term is negligible in most applications.

The profile decomposition starts from a refinement of the Strichartz inequality. The usual Strichartz inequality \cite[p.38, Theorem 2.1]{Kenig-Ponce-Vega:1991:dispersive-estimates} asserts that
\begin{equation}\label{eq:symmetric-strichartz}
\|D_\mu^{1/3}S_\mu(t)f\|_{L^6_{t,x}(\mathbb{R}\times \mathbb{R})}\le C\|f\|_{L^2},
\end{equation} where $S_\mu(t)$ is the solution operator to Equation \eqref{eq:4thNLS} defined by
$$S_\mu(t)f(x):=e^{it(\Delta^2-\mu\Delta)}f(x):
=\int_\mathbb{R}e^{i(x\xi+t\phi_\mu(\xi)}\widehat{f}(\xi)d\xi,\,\phi_\mu(\xi)=\xi^4+\mu\xi^2;$$
and $D^{\alpha}_\mu$ with $\alpha\in \mathbb{R}$ is the nonhomogeneous differentiation operator for by
\begin{equation*}
D_\mu^{\alpha} f(x):=\int_\mathbb{R} e^{ix\xi}(\mu+6\xi^2)^\frac{\alpha}{2} \widehat{f}(\xi)d\xi.
\end{equation*}
We shall write $S(t)=S_0(t)$ and $D^\alpha=D^\alpha_0$. Note that Estimate \eqref{eq:symmetric-strichartz} also follows from \cite{BenArtzi-Koch-Saut:2000:dispersive-estimate-for-4th-NLS}, and of course from the refinement in Lemma \ref{le:refined-strichartz}. The primary reasons for us to study $D^{\alpha}_\mu$ is:  (1) to treat two interesting cases $\mu=0$ and $\mu>0$ in a same manner; (2) The oscillatory integral $$\int_\mathbb{R} e^{ix\xi+it\phi_\mu(\xi)}|\mu+6\xi^2|^{1/6} d\xi $$ on the left hand side of \eqref{eq:symmetric-strichartz} matches the form considered by Kenig, Ponce and Vega in \cite[p.38, (2.2)]{Kenig-Ponce-Vega:1991:dispersive-estimates} up to a constant multiple $2^{1/6}$, as $\phi_\mu^{\prime\prime}(\xi)=2(\mu+6\xi^2)$.

The estimate \eqref{eq:symmetric-strichartz} is not optimal within Besov spaces. We need the following refinement for our purpose.
\begin{lemma}\label{le:refined-strichartz} For any $p>1$ and $\mu\ge 0$
\begin{equation}\label{eq:refined-strichartz}
\|D_\mu^{1/3}S_\mu(t)f\|_{L^6_{t,x}(\mathbb{R}\times \mathbb{R})}
\le C\left(\sup_{\tau}|\tau|^{\frac 12-\frac 1p}\|\hat{f}\|_{L^p(\tau)}\right)^{1/3}\|f\|^{2/3}_{L^2},
\end{equation} where $\tau$ denotes an interval on the real line with the length $|\tau|$.
\end{lemma}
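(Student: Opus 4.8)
The plan is to run the standard bilinear-to-linear machinery behind refined Strichartz inequalities, in the form used for the one dimensional Schr\"odinger equation in \cite{Carles-Keraani:2007:profile-schrod-1d} and for the Airy equation in \cite{Shao:2008:linear-profile-Airy-Maximizer-Airy-Strichartz}. Since $\|D_\mu^{1/3}S_\mu(t)f\|_{L^6_{t,x}}^2=\|(D_\mu^{1/3}S_\mu(t)f)^2\|_{L^3_{t,x}}$, the first step is a Whitney decomposition of frequency$\times$frequency space $\mathbb{R}^2$ away from the diagonal: for each dyadic scale one pairs intervals $\tau_1,\tau_2$ of equal length with $\operatorname{dist}(\tau_1,\tau_2)$ comparable to that length, so that, writing $g_\tau$ for the Fourier restriction of $g$ to $\tau$,
\[
(D_\mu^{1/3}S_\mu(t)f)^2=\sum_{Q=\tau_1\times\tau_2}(D_\mu^{1/3}S_\mu(t)f)_{\tau_1}\,(D_\mu^{1/3}S_\mu(t)f)_{\tau_2}.
\]
Everything then reduces to (i) a bilinear estimate for a single pair, and (ii) summing the pieces. (Transplanting the known Schr\"odinger refinement seems hopeless here: the substitution $\sigma=\xi^2$ makes $\phi_\mu$ a parabola in $\sigma$ but acts nonlinearly in the spatial variable.)

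For (i), fixing a Whitney pair $(\tau_1,\tau_2)$ and writing $f_j$ for the corresponding pieces of $f$ (so $\widehat{f_j}$ is supported in $\tau_j$), I would express the spacetime Fourier transform of $D_\mu^{1/3}S_\mu(t)f_1\cdot D_\mu^{1/3}S_\mu(t)f_2$ by the co-area formula, via the change of variables $(\xi_1,\xi_2)\mapsto(\xi_1+\xi_2,\,\phi_\mu(\xi_1)+\phi_\mu(\xi_2))$, whose Jacobian is
\[
\phi_\mu'(\xi_2)-\phi_\mu'(\xi_1)=(\xi_2-\xi_1)\bigl(4(\xi_1^2+\xi_1\xi_2+\xi_2^2)+2\mu\bigr)
\]
and whose fibers are finite (the equation in $\xi_1$ has degree $4$, so at most four preimages). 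Cauchy--Schwarz on the fiber yields
\[
\bigl\|D_\mu^{1/3}S_\mu(t)f_1\cdot D_\mu^{1/3}S_\mu(t)f_2\bigr\|_{L^2_{t,x}}^2\lesssim\sup_{\xi_j\in\tau_j}\frac{(\mu+6\xi_1^2)^{1/3}(\mu+6\xi_2^2)^{1/3}}{|\xi_1-\xi_2|\,\bigl(4(\xi_1^2+\xi_1\xi_2+\xi_2^2)+2\mu\bigr)}\,\|f_1\|_{L^2}^2\|f_2\|_{L^2}^2 .
\]
The algebraic heart of the matter is that $\xi_1^2+\xi_1\xi_2+\xi_2^2\ge\tfrac34\xi_j^2$ and $\ge\tfrac12(\xi_1^2+\xi_2^2)$, so that $4(\xi_1^2+\xi_1\xi_2+\xi_2^2)+2\mu\gtrsim(\mu+6\xi_1^2)^{1/2}(\mu+6\xi_2^2)^{1/2}$; hence the weight produced by $D_\mu^{1/3}$ exactly cancels the degeneracy of $\phi_\mu''$ at $\xi=0$ --- the only place where the cases $\mu=0$ and $\mu>0$ differ --- uniformly in $\mu\ge0$, leaving a clean power of the separation. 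Combining this with the trivial bound $\|D_\mu^{1/3}S_\mu(t)f_j\|_{L^\infty_{t,x}}\le\|(\mu+6\xi^2)^{1/6}\widehat{f_j}\|_{L^1}\lesssim|\tau_j|^{1-\frac1p}\bigl(\sup_{\xi\in\tau_j}(\mu+6\xi^2)^{1/6}\bigr)\|\widehat{f_j}\|_{L^p(\tau_j)}$ --- which is where the hypothesis $p>1$ enters, the exponent $1-\tfrac1p$ being strictly positive --- and interpolating via $\|g\|_{L^3}\le\|g\|_{L^2}^{2/3}\|g\|_{L^\infty}^{1/3}$, one gets the bilinear estimate
\[
\bigl\|(D_\mu^{1/3}S_\mu(t)f)_{\tau_1}(D_\mu^{1/3}S_\mu(t)f)_{\tau_2}\bigr\|_{L^3_{t,x}}\lesssim\bigl(\|\widehat f\|_{L^2(\tau_1)}\|\widehat f\|_{L^2(\tau_2)}\bigr)^{2/3}\Bigl(|\tau_1|^{\frac12-\frac1p}\|\widehat f\|_{L^p(\tau_1)}\,|\tau_2|^{\frac12-\frac1p}\|\widehat f\|_{L^p(\tau_2)}\Bigr)^{1/3},
\]
in which all powers of $|\tau_j|$ and of the weights cancel exactly --- precisely the critical balance the factor $D_\mu^{1/3}$ is designed to produce.

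For (ii), the summands have finitely overlapping spacetime Fourier supports (the sum map above is at most four-to-one off the diagonal and the cubes are disjoint), so an almost-orthogonality argument --- a square function estimate of C\'ordoba--Rubio de Francia type, adapted to the quartic surface $\{\zeta=\phi_\mu(\xi_1)+\phi_\mu(\xi_2)\}$ --- gives
\[
\|(D_\mu^{1/3}S_\mu(t)f)^2\|_{L^3_{t,x}}^2\lesssim\sum_{Q=\tau_1\times\tau_2}\bigl\|(D_\mu^{1/3}S_\mu(t)f)_{\tau_1}(D_\mu^{1/3}S_\mu(t)f)_{\tau_2}\bigr\|_{L^3_{t,x}}^2 .
\]
Inserting the bilinear estimate, using that the Whitney cubes partition $\mathbb{R}^2$ off the diagonal --- whence $\sum_Q\|\widehat f\|_{L^2(\tau_1)}^2\|\widehat f\|_{L^2(\tau_2)}^2=\|f\|_{L^2}^4$ --- and that $\|\widehat f\|_{L^p(\tau)}\le|\tau|^{\frac1p-\frac12}\|\widehat f\|_{L^2(\tau)}$ for $p<2$, so that the $L^p$ cap quantities are dominated by the $L^2$ cap quantities they multiply, the right-hand side collapses to $\bigl(\sup_\tau|\tau|^{\frac12-\frac1p}\|\widehat f\|_{L^p(\tau)}\bigr)^{4/3}\|f\|_{L^2}^{8/3}$; since the left-hand side equals $\|D_\mu^{1/3}S_\mu(t)f\|_{L^6_{t,x}}^4$, this is \eqref{eq:refined-strichartz}.

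The step I expect to be genuinely delicate --- and the only one specific to the fourth order equation --- is making the decomposition uniform in $\mu\ge0$ and compatible with the degenerate geometry of $\phi_\mu$. The ``effective separation'' controlling the Jacobian is $|\xi_1-\xi_2|\,\bigl(4(\xi_1^2+\xi_1\xi_2+\xi_2^2)+2\mu\bigr)$ rather than $|\xi_1-\xi_2|$, so the Whitney pairing and the square function estimate should be carried out relative to a Littlewood--Paley-type decomposition adapted to the weight $(\mu+6\xi^2)^{1/6}$ --- chosen so that this weight is comparable to a constant on each block --- and the frequency intervals straddling $\xi=0$, where that weight is not comparable to a constant when $\mu$ is small, need a separate dyadic treatment near the origin. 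One then has to verify that this degeneracy is entirely absorbed by $D_\mu^{1/3}$, which, as the computations above indicate, is exactly why that factor appears; granting this, the remainder follows the Schr\"odinger/Airy template.
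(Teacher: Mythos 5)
Your approach --- a Whitney bilinear decomposition in frequency$\times$frequency space, a bilinear $L^2\to L^3$ estimate on each pair via the coarea change of variables plus $L^2$--$L^\infty$ interpolation, and an almost-orthogonality square-function to put the pieces back together --- is genuinely different from the paper's. The paper instead squares the $L^6$ norm, writes $|D_\mu^{1/3}S_\mu(t)f|^2$ as an oscillatory integral in $(u,v)=(\phi_\mu(\xi)-\phi_\mu(\eta),\xi-\eta)$, applies Hausdorff--Young directly to reduce to the double integral $\iint |\widehat f(\xi)\widehat f(\eta)|^{3/2}|\xi-\eta|^{-1/2}\,d\xi\,d\eta\lesssim\|\widehat f\|_{L^2}^2$ under the normalization $\sup_\tau|\tau|^{1/2-1/p}\|\widehat f\|_{L^p(\tau)}=1$, then Whitney-decomposes this double integral and --- crucially --- performs a further level-set decomposition $\widehat f_{n,I}:=\widehat f\,\mathbf{1}_{\{2^n|I|^{-1/2}\le\widehat f<2^{n+1}|I|^{-1/2}\}}$ on each block. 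The Chebyshev/Cauchy--Schwarz estimates for $\int\widehat f_{n,I}$ produce a gain $2^{-|n|\varepsilon}$ (and this is exactly where $p>1$ is used), which makes the double sum over $I\in\mathcal D$ and $n\in\mathbb Z$ converge.

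This level-set gain is precisely what your proposal is missing, and I do not see how your endgame closes. After inserting your bilinear estimate into the square-function inequality and pulling out the supremum, you are left needing
\[
\sum_{Q=\tau_1\times\tau_2}\|\widehat f\|_{L^2(\tau_1)}^{4/3}\|\widehat f\|_{L^2(\tau_2)}^{4/3}\lesssim\|f\|_{L^2}^{8/3},
\]
but what the disjointness of the Whitney cubes gives is $\sum_Q\|\widehat f\|_{L^2(\tau_1)}^{2}\|\widehat f\|_{L^2(\tau_2)}^{2}=\|f\|_{L^2}^4$, and $\sum_Q a_Q^{2/3}\not\lesssim(\sum_Q a_Q)^{2/3}$ (the inequality runs the wrong way; the left-hand side can be arbitrarily large relative to the right for fixed $\sum a_Q$). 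Two smaller inaccuracies compound this: first, the C\'ordoba-type almost-orthogonality for finitely overlapping Fourier supports at $L^3$ gives $\ell^{3/2}$-summation of the $L^3$ norms, not $\ell^2$, so even the form of your display in step (ii) is off; second, the step ``$\|\widehat f\|_{L^p(\tau)}\le|\tau|^{1/p-1/2}\|\widehat f\|_{L^2(\tau)}$, so the $L^p$ cap quantities are dominated by the $L^2$ cap quantities'' simply replaces the cap quantity $|\tau|^{1/2-1/p}\|\widehat f\|_{L^p(\tau)}$ by $\|\widehat f\|_{L^2(\tau)}$, which throws away the refinement entirely and recovers only the unrefined Strichartz inequality. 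The one-dimensional $L^6$ case is exactly the borderline where the pure bilinear-Whitney route does not close on its own; you need the additional level-set (or equivalent) decomposition that extracts a geometric gain from the $L^p$ Fourier smallness. Your algebraic observation that the weight $(\mu+6\xi^2)^{1/6}$ exactly compensates the degeneracy of $\phi_\mu''$ is correct and is the same computation the paper uses --- but it is the easy part; the heart of the proof is the summation, and that is the part that is missing.
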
 We will adapt a proof from \cite{Shao:2008:linear-profile-Airy-Maximizer-Airy-Strichartz} and it will be proven in Section \ref{sec:strichartz-estimates}.

By using Lemma \ref{le:refined-strichartz} and certain improved localized restriction estimates in Lemma \ref{L:loaclrestrict} in Section \ref{sec:local-restriction-estimates}, we can prove the following theorem, which is the main result in this paper.

\begin{theorem}[Linear profile decomposition]\label{thm:linear-profile}
Let $\mu\ge 0$ and let $(u_n)_{n\ge 1}$ be a sequence of complex-valued functions satisfying $\|u_n\|_{L^2}\leq 1$.
Then up to a subsequence, for any $l\ge 1$, there exists a sequence of functions $(\phi^j)_{1\le j\le l}\in L^2$, $w_n^l\in L^2$ and a family of parameters, $(h_n^j,\xi_n^j, x_n^j, t_n^j)_{1\le j\le l,\atop n\ge 1}$, such that
\begin{equation}\label{eq:profile-decomposition}
u_n=\sum_{1\le j\le l, \xi^j_n\equiv 0, \atop \text{ or } |h_n^j\xi_n^j|\to \infty} S_{\mu}(t^{j}_n)g^j_n[e^{i(\cdot)h^j_n\xi^j_n}\phi^j]+w^l_n,
\end{equation}
where $g^j_n(\phi):=\frac{1}{(h^j_n)^{1/2}}\phi(\frac{x-x^j_n}{h^j_n})$. This decomposition enjoys the following properties:
\begin{equation}\label{eq:error-term}
\limsup_{l\to\infty}\limsup_{n\to\infty}\|D_{\mu}^{1/3}S_{\mu}(t)w^l_n\|_{L^6_{t,x}(\mathbb{R}
\times\mathbb{R})}=0,
\end{equation} and for $j\neq k$, $(h_n^j,\xi_n^j,x_n^j,t_n^j)_{n\ge 1}$ and $(h_n^k,\xi_n^k,x_n^k,t_n^k)_{n\ge 1}$ are pairwise orthogonal in the sense that,
\begin{align}\label{eq:ortho}
&\text{either }\limsup_{n\to\infty}(\frac{h^j_n}{h^k_n}+\frac{h^k_n}{h^j_n}+h^j_n|\xi^j_n-\xi^k_n|)=\infty,\\
&\text{or }(h^j_n,\xi^j_n)=(h^k_n,\xi^k_n)\;\text{ and }\\
&\nonumber \limsup_{n\to\infty}\frac{|t^k_n-t^j_n|}{(h^j_n)^4}+\frac{|(t^k_n-t^j_n)(\mu+6(\xi^j_n)^2)|}{(h^j_n)^2}+\frac{|x^j_n-x^k_n- 2(t^j_n-t^k_n)(2(\xi^j_n)^2+\mu)\xi_n^j|}{h^j_n}=\infty.
\end{align}
\end{theorem}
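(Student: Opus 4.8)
The plan is to follow the now-standard inductive extraction scheme for profile decompositions, using the refined Strichartz estimate of Lemma~\ref{le:refined-strichartz} as the engine. First I would set $A_1 := \limsup_{n\to\infty}\|D_\mu^{1/3}S_\mu(t)u_n\|_{L^6_{t,x}}$; if $A_1=0$ we simply take all profiles to be zero and $w_n^l = u_n$, so assume $A_1>0$. By Lemma~\ref{le:refined-strichartz} (applied with some fixed $p\in(1,2)$, e.g.\ $p=3/2$) there is a constant $c>0$ and intervals $\tau_n$ with
\[
|\tau_n|^{\frac12-\frac1p}\|\widehat{u_n}\|_{L^p(\tau_n)}\ge c\,A_1^{3}.
\]
Writing $\xi_n^1$ for (say) the center of $\tau_n$ and $h_n^1 := |\tau_n|^{-1}$, this says that, after the symmetry $u_n \mapsto (h_n^1)^{1/2} e^{-i(\cdot)h_n^1\xi_n^1}u_n(h_n^1\,\cdot\,)$ renormalizing $\tau_n$ to the unit interval, the Fourier transforms are bounded below in $L^p$ of a fixed compact set. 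At this point I would invoke the improved localized restriction estimate, Lemma~\ref{L:loaclrestrict}, to upgrade this $L^p$ lower bound into the existence of times $t_n^1$ and spatial centers $x_n^1$ together with a nonzero weak limit: passing to a subsequence, the renormalized, time-translated functions $S_\mu(-t_n^1)[\,\cdot\,]$ converge weakly in $L^2$ to a profile $\phi^1\neq 0$ with $\|\phi^1\|_{L^2}\gtrsim A_1^{N}$ for an explicit power $N$. The dichotomy in \eqref{eq:profile-decomposition} between $\xi_n^j\equiv 0$ and $|h_n^j\xi_n^j|\to\infty$ arises here: depending on whether $h_n^1\xi_n^1$ stays bounded (in which case, after absorbing a bounded modulation into $\phi^1$, one may reset $\xi_n^1\equiv 0$) or escapes to infinity, one lands in one of the two regimes, and the factor $e^{i(\cdot)h_n^j\xi_n^j}$ in the statement records exactly this frequency translation.

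Next I would iterate. Set $u_n^1 := u_n$ and $w_n^1 := u_n^1 - S_\mu(t_n^1)g_n^1[e^{i(\cdot)h_n^1\xi_n^1}\phi^1]$, and in general, having extracted $\phi^1,\dots,\phi^{l}$ with parameters, define $w_n^l$ by \eqref{eq:profile-decomposition} and apply the same procedure to the sequence $w_n^l$ to extract $\phi^{l+1}$ and its parameters with $\|\phi^{l+1}\|_{L^2}\gtrsim A_{l+1}^{N}$, where $A_{l+1} := \limsup_n\|D_\mu^{1/3}S_\mu(t)w_n^l\|_{L^6_{t,x}}$. Two facts drive the induction. The first is the \emph{Pythagorean expansion} of $L^2$ mass,
\[
\|u_n\|_{L^2}^2 = \sum_{j=1}^{l}\|\phi^j\|_{L^2}^2 + \|w_n^l\|_{L^2}^2 + o_n(1),
\]
which is a consequence of the weak-limit construction of $\phi^j$ together with the pairwise orthogonality \eqref{eq:ortho} of the parameters — the latter must be verified at each step by showing that if a new extracted parameter string failed to be orthogonal to a previously extracted one, then the corresponding weak limit would vanish, contradicting its lower bound. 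The mass expansion forces $\sum_j\|\phi^j\|_{L^2}^2 \le 1$, hence $\|\phi^j\|_{L^2}\to 0$, hence $A_l \to 0$ as $l\to\infty$, which is precisely \eqref{eq:error-term}. The second fact is that, because $w_n^l$ is built by subtracting $S_\mu(t_n^{l})g_n^{l}[\dots]$ whose evolution carries the lower bound $A_{l}$ of the $L^6$ norm of $S_\mu(t)w_n^{l-1}$, the $L^6$ norms do decrease in a quantitative way compatible with $A_l\to 0$; combined with an ``almost orthogonality of the free evolutions'' argument (the free evolutions of distinct profiles have asymptotically disjoint supports in $L^6_{t,x}$, again by \eqref{eq:ortho}) this also shows $\limsup_n\|D_\mu^{1/3}S_\mu(t)(\sum_j S_\mu(t_n^j)g_n^j[\dots])\|_{L^6}$ is comparable to $(\sum_j\|\phi^j\|_{L^6\text{-type}})$, so no mass is lost into the $L^6$-norm in a way that escapes the accounting.

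The main obstacle, and the place where the fourth-order structure genuinely intervenes, is the orthogonality condition \eqref{eq:ortho} — specifically, proving that the full physical-space symmetry group of \eqref{eq:4thNLS} acts on the weak limits in the expected way, and identifying the precise orthogonality relations. The group here is generated by scaling, space translation, frequency translation (Galilean-type boosts), and time translation, and the nontrivial point is how a frequency translation by $\xi_n^j$ conjugates the flow $S_\mu(t)$: the group law produces the shear $x \mapsto x - 2t(2\xi^2+\mu)\xi$ and the modified time scales $t/h^4$ and $t(\mu+6\xi^2)/h^2$ appearing in the last displayed line of the theorem — these are exactly $\phi_\mu'(\xi)$ and $\tfrac12\phi_\mu''(\xi)$ entering through the stationary-phase/Taylor expansion of $\phi_\mu$ around the concentration frequency. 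Establishing that two parameter strings which are \emph{not} orthogonal in the sense of \eqref{eq:ortho} give rise, after renormalization, to the \emph{same} profile up to a symmetry (so that one of the two weak limits must be zero) requires a careful case analysis according to whether $h_n^j\xi_n^j\to\infty$ or $\xi_n^j\equiv 0$, because the limiting operator one obtains from $S_\mu$ after rescaling is different in the two regimes — it degenerates to (a rescaling of) the free Schr\"odinger propagator $e^{it\Delta}$ in the high-frequency regime (since $\phi_\mu(\xi_n+\eta)\approx \phi_\mu(\xi_n) + \phi_\mu'(\xi_n)\eta + \tfrac12\phi_\mu''(\xi_n)\eta^2$ with the quadratic term dominating on the relevant scale), and remains a genuine fourth-order propagator $e^{it(\Delta^2-\mu\Delta)}$ (or $e^{it\Delta^2}$) when $\xi_n\equiv 0$. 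I expect the bookkeeping of these two regimes, and the verification that the localized restriction estimate of Lemma~\ref{L:loaclrestrict} is strong enough to produce nonzero weak limits uniformly in both, to be the technically heaviest part of the argument; the rest is the standard diagonal-subsequence and summation-of-errors machinery.
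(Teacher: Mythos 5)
Your proposal has the right high-level strategy --- refined Strichartz to locate concentration intervals, weak limits to extract space/time parameters, a Pythagorean $L^2$ expansion, and $L^6$ decoupling of distinct profile evolutions --- and in broad outline this matches the paper, though you organize it as a one-at-a-time extraction whereas the paper does a two-stage batch decomposition (Lemma~\ref{L:scalefre} for scaling/frequency, then Lemma~\ref{L:spacetime} for space/time, followed by a re-labeling of the resulting double-indexed family). Two of your key steps, however, are mischaracterized in a way that leaves genuine gaps.

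First, the step ``invoke Lemma~\ref{L:loaclrestrict} to upgrade the $L^p$ lower bound into a nonzero weak limit'' is not how that lemma is used, and as stated the upgrade would not go through. Lemma~\ref{L:loaclrestrict} gives a uniform $L^q_{t,x}$ bound ($4<q<6$) only for data whose Fourier transform is bounded and supported in a fixed ball; to apply it you must first \emph{truncate} $\widehat{u}_n$ to the interval $\tau_n$ \emph{and} to the sublevel set $\{|\widehat{u}_n|\le C_\delta |\tau_n|^{1/2}\}$, which is precisely the content of the paper's Lemma~\ref{L:scalefre} and is missing from your scheme. Only after this frequency localization can one interpolate the uniform $L^q$ bound against an $L^\infty_{t,x}$ bound controlled by the maximal weak-limit mass $\mu(P^M)$; the nonzero-ness of the weak limit and the quantitative bound $\|\phi\|_{L^2}\gtrsim A^N$ both come out of this interpolation, not out of Lemma~\ref{L:loaclrestrict} by itself. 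Without the truncation the renormalized $\widehat{w}_n^l$ need not have compact support or an $L^\infty$ bound, and the interpolation argument collapses.

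Second, you compress the proof of~\eqref{eq:error-term} into ``$\sum_j\|\phi^j\|_{L^2}^2\le 1$ hence $\|\phi^j\|_{L^2}\to 0$ hence $A_l\to 0$,'' backed by a one-line appeal to ``almost orthogonality of the free evolutions.'' In the paper this is where most of the hard analysis lives: the vanishing of the combined remainder (after re-labeling the $(j,\alpha)$ family) requires the full $L^6$-orthogonality result, Lemma~\ref{le:orthog-strichartz-space}, whose proof is an intricate case-by-case stationary-phase analysis of the fourth-order propagator, split according to whether $h_n^j\xi_n^j\to\infty$ or $\xi_n^j\equiv 0$ and according to which term in~\eqref{eq:ortho} diverges; the quantities $\phi_\mu'(\xi_n^j)$ and $\tfrac12\phi_\mu''(\xi_n^j)$ you correctly identify enter precisely here. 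Your one-at-a-time organization is a legitimate alternative to the paper's batch scheme, but this $L^6$-decoupling lemma is equally indispensable in that setting and cannot be waved off as routine.
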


\begin{remark}[Orthogonality of profiles]\label{re:profiles-orthogonal}
The orthogonality condition on the parameters, $\{(h_n^j,\xi_n^j,x_n^j,t_n^j)\}$, is the origin of orthogonality for profiles. Under this condition, the profiles are separated either in the spatial space, or in the frequency space, or have very different scales, or are distant in time. In particular, we have, for any $l\ge 1$,
\begin{align}
\label{eq:L2-orthogonality} &\limsup_{n\to\infty}\Big(\|u_n\|^2_{L^2}-(\sum^l_{j=1}\|\phi^j\|^2_{L^2}+\|\omega^l_n\|^2_{L^2}) \Big)=0.\\
\label{eq:strichartz-space-orthogonality}
&\limsup_{n\to\infty}\bigl(\|\sum_{1\le j\le l} D_{\mu}^{1/3}S_{\mu}(t+t_n^j) g_n^j [e^{i(\cdot)h_n^j\xi_n^j}\phi^j]\|^6_{L^6_{t,x}} -\sum_{1\le j\le l}\| D_{\mu}^{1/3}S_{\mu}(t+t_n^j) g_n^j [e^{i(\cdot)h_n^j\xi_n^j}\phi^j]\|^6_{L^6_{t,x}}\bigr)=0.
\end{align}
\end{remark}

\begin{remark}[Lack of Galilean transform]\label{re:lack-Galilean}
In the decomposition \eqref{eq:profile-decomposition}, we have treated the high and low frequencies differently. This is essentially due to lack of Galilean transform for Equation \eqref{eq:4thNLS}. More precisely, a computation for $S$ reveals that,
$$S(t)[e^{i(\cdot)N}\phi](x)
=e^{ixN+itN^4}e^{it\Delta^2+4itN\partial_x^3+i6N^2t\partial_x^2}\phi(x+4tN^3).$$
The operator on the right hand side can not be expressed as a form of $S(t)$ and contains some mixed terms, $e^{4itN\partial_x^3+6itN^2\partial_x^2}$. In contrast, for the linear Schr\"odinger evolution operator $e^{it\Delta}$,
$$e^{it\Delta}[e^{i(\cdot)N}\phi](x)
=e^{ixN+itN^2}[e^{it\Delta}]\phi(x+2tN),$$ which heuristically says that, up to a modulation $e^{ixN+itN^2}$, the propagation of a high-frequency wave is a dislocation in spatial space of the propagation of a low frequency wave, which is the ``so-called" effect of Galilean transform.

However, in view of Proposition \ref{prop:convg-to-schr}, when $N\to \infty$, $S(t)[e^{i(\cdot)N}\phi]$ behaves like a second order Schr\"odinger solution, $e^{-it\Delta}\phi$. We will use it to compare the optimal constants of the Strichartz inequalities for both equations, see the argument of Theorem \ref{thm:dichotomy}. It is similar to a previous observation by Christ, Colliander and Tao in \cite{Christ-Colliander-Tao:2003:asymptotics-modulation-canonical-defocusing-eqs} that the solutions to Korteweg-de Vries equations (KdV) or modified KdV at high frequencies can be well approximated by those to nonlinear Schr\"odinger equation (NLS); this observation turns out to be very useful to explore various wellposedness/ill-posedness results between KdV and NLS equations; see also \cite{Tao:2007:two-remarks-on-gKdV}, \cite{Shao:2008:linear-profile-Airy-Maximizer-Airy-Strichartz} and \cite{Killip-Kwon-Shao-Visan:2008:minimal-mass-blow-up-solution-to-critical-gKdV}.
\end{remark}

The decomposition in Theorem \ref{thm:linear-profile} is similar to that in \cite{Shao:2008:linear-profile-Airy-Maximizer-Airy-Strichartz} for the Airy Equation, where lack of Galilean transform is the case and hence different frequencies are treated in different ways. The new difficulty here is a lack of scaling invariance when $\mu >0$; in other words, we can only take advantage of the spatial and temporal translations; this complicates the task of establishing orthogonality results for profiles which are essential for all purposes, see Lemma \ref{le:orthog-strichartz-space}.
\begin{remark}[A comparison with nonlinear wave equation (NLW) and NLS]\label{re:comparison-NLS-NLW}
Let us make a comparison with those for NLW and NLS.
\begin{itemize}
\item In \cite{Bahouri-Gerard:1999:profile-wave}, for energy critical nonlinear wave equations with $\dot{H}^1$-initial data in $\mathbb{R}^3$, Bahouri-G\'erard establish the following decomposition,
$$e^{it\sqrt{-\Delta}}u_n(x)=\sum_{j=1}^l \frac {1}{\sqrt{h_n^j}} e^{it\sqrt{-\Delta}} \phi^j(\frac {t-t_n^j}{h_n^j}, \frac {x-x_n^j}{h_n^j})+e_n^l. $$ There is no frequency parameter since modulation is not a symmetry in $\dot{H}^1$.

\item In \cite{Merle-Vega:1998:profile-schrod}, for mass critical nonlinear Schr\"odinger equation with $L^2$-initial data in $\mathbb{R}^2$, Merle-Vega obtain the following decomposition,
    $$e^{it\Delta}u_n(x)=\sum_{j=1}^l \frac {1}{h_n^j} e^{ix\xi_n^j}e^{it_n^j\Delta} \phi^j(\frac {t-t_n^j}{(h_n^j)^2}\frac {x-x_n^j}{h_n^j})+e_n^l. $$
    There is no difference between high-low frequencies thanks to the Galilean transform.
\end{itemize}
\end{remark}

The linear profile decomposition proves to be a very useful tool in understanding the global wellposedness and scattering problems to certain critical and supercritical nonlinear dispersive equations. It serves as the primary motivation to develop such decompositions in order to understand certain nonlinear analogue of Equation \eqref{eq:4thNLS}, for instance, see \cite{Pausader:2009:cubic-4th-NLS, Pausader:2009:focusing-4th-NLS-radial, Pausader-Shao:2009:GWP-L2-critical-4NLS-high-dimensions}. In \cite{Kenig-Merle:2006:focusing-energy-NLS-radial}, Kenig-Merle introduced the method of concentration-compactness/rigidity to study the global wellposedness and scattering problems for the focusing radial nonlinear Schr\"odinger equation at the energy critical regularity; a key ingredient is the linear profile decomposition developed by Keraani \cite{Keraani:2001:profile-schrod-H^1}, which is employed to obtain the existence of minimal-energy blow-up solution. Similar ideas of extracting minimal blow-up ``bubbles" appearing previously in the works of Bourgain and I-team (Colliander, Keel, Staffilani, Takaoka, Tao) \cite{Bourgain:1999:radial-NLS, I-teem:2008:GWP-for-energy-critical-NLS-in-R3} for energy-critical NLS in $\mathbb{R}^3$. For applications to the mass/energy-critical nonlinear Schr\"odinger equations, we refer readers to Killip-Visan's survey \cite{Killip-Visan:2008:clay-lecture-notes}.

\subsection{An application} In \cite{Shao:2008:linear-profile-Airy-Maximizer-Airy-Strichartz, Shao:2008:maximizers-Strichartz-Sobolev-Strichartz}, the third author used the linear profile decomposition to prove the existence of extremals for the Strichartz inequality for the Schr\"odinger equation in high dimensions. This approach can be viewed as a simplified manifestation of the concentration-compactness idea. In this paper we consider a similar ``extremisers" problem,
\begin{equation}\label{eq:extremisers}
\mathbf{S}:=\sup_{f\neq 0,\,\|f\|_{L^2}\le 1} \frac {\|D^{1/3}S(t)f\|_{L^6_{t,x}(\mathbb{R}\times \mathbb{R})}}{\|f\|_{L^2}}.
 \end{equation} Here $S(t):=S_0(t)$. We will establish a dichotomy result on existence of extremals for \eqref{eq:extremisers} by using the ``profile decomposition" tool.

In context of the Strichartz inequality for the Schr\"odinger equation in low dimensions, there are other methods to prove existence of extremals such as by an elaborate concentration-compactness method by Kunze \cite{Kunze:2003:maxi-strichartz-1d}, by two successive applications of the Cauchy-Schwarz inequality by Foschi \cite{Foschi:2007:maxi-strichartz-2d}, by developing a representation formula of the Strichartz inequality by Hundertmark, Zharnitsky \cite{Hundertmark-Zharnitsky:2006:maximizers-Strichartz-low-dimensions}, and by using the heat-flow deformation method by Bennett, Bez, Carbery, Hundertmark \cite{Bennett-Bez-Carbery-Hundertmark:2008:heat-flow-of-strichartz-norm}; also see \cite{Carneiro:2008:sharp-strichartz-norm}. Moreover Gaussians are proven to be extremals \cite{Foschi:2007:maxi-strichartz-2d, Hundertmark-Zharnitsky:2006:maximizers-Strichartz-low-dimensions, Bennett-Bez-Carbery-Hundertmark:2008:heat-flow-of-strichartz-norm}.

We first note that the solution map, $S(t)$, from $L^2$ to the Strichartz space is not compact: an arbitrary $L^2$ bounded sequence may not give rise to a strongly convergent subsequence in the Strichartz space. Indeed, that $S(t)$ fails to be compact can be easily seen by creating counterexamples of considering several explicit symmetries in $L^2$, e.g.,
\begin{itemize}
\item spatial translation, $u(t,x)\to u(t,x-x_0) $ for some $x_0\in \mathbb{R}$.
\item time translation, $u(t,x)\to u(t-t_0,x) $ for some $t_0\in \mathbb{R}$.
\item scaling, $u(t,x) \to \lambda^{-1/2}u(t/\lambda^4,x/\lambda)$ for some $\lambda>0$.
\item modulation, $f\to e^{ix\xi_0}f$ for some $\xi_0\in \mathbb{R}$.
\end{itemize}
However as an application of the profile decomposition in Theorem \ref{thm:linear-profile}, we are able to establish a dichotomy result on the existence of an extremal $f$ to the Strichartz inequality \eqref{eq:extremisers}.

\begin{theorem}\label{thm:dichotomy}
Either an extremiser exist for $\mathbf{S}$, or there exists a sequence of $a_n$ satisfying $\lim_{n\to\infty}|a_n|=\infty$ and $f\in L^2$ so that
$$\mathbf{S}=\lim_{n\to\infty}\frac {\|D^{1/3}S(t)[e^{ixa_n}f]\|_{L^6_{t,x}}}{\|f\|_{L^2}}.$$
Moreover, in the latter case, $\mathbf{S}=\mathbf{S}_{schr}$ where $\mathbf{S}_{schr}$ is the optimal constant for the Strichartz inequality for the Schr\"odinger equation defined by
\begin{equation}\label{eq:schr-max}
\mathbf{S}_{schr}:=\sup_{\phi\neq 0,\,\|\phi\|_{L^2}\le 1}\frac {\|e^{-it\Delta} \phi\|_{L^6_{t,x}}}{\|\phi\|_{L^2}};
\end{equation}
and $f$ can be identified as Gaussians up to the natural symmetries associated to \eqref{eq:schr-max}.
\end{theorem}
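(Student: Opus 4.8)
The plan is the standard concentration--compactness argument built on Theorem~\ref{thm:linear-profile}, combined with the high--frequency comparison with the Schr\"odinger flow furnished by Proposition~\ref{prop:convg-to-schr}. Since $\mathbf{S}$ is finite by \eqref{eq:symmetric-strichartz} and positive, and the ratio in \eqref{eq:extremisers} is homogeneous, I would start from a maximizing sequence $(u_n)$ normalized so that $\|u_n\|_{L^2}=1$ and $\|D^{1/3}S(t)u_n\|_{L^6_{t,x}}\to\mathbf{S}$, and apply Theorem~\ref{thm:linear-profile} with $\mu=0$. Setting $v^j_n:=D^{1/3}S(t+t^j_n)g^j_n[e^{i(\cdot)h^j_n\xi^j_n}\phi^j]$ for the $j$-th term obtained by applying $D^{1/3}S(t)$ to the profiles in \eqref{eq:profile-decomposition}, and passing to a subsequence so that each limit $A^j:=\lim_n\|v^j_n\|_{L^6_{t,x}}$ exists, the triangle inequality in $L^6$ together with \eqref{eq:strichartz-space-orthogonality} and \eqref{eq:error-term} (let $n\to\infty$ for fixed $l$, then $l\to\infty$) gives
$$\mathbf{S}^6\ \le\ \sum_{j\ge1}(A^j)^6 .$$

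The next step is to evaluate $A^j$ in the two admissible regimes. The quantity $\|D^{1/3}S(t)\psi\|_{L^6_{t,x}}$ (with $\mu=0$) is invariant under space and time translations and under the scaling $\psi\mapsto h^{-1/2}\psi(\cdot/h)$, the latter because $D^{1/3}=6^{1/6}|\partial_x|^{1/3}$ and $S(t)=e^{it\partial_x^4}$ rescale compatibly and the resulting amplitude factor and space--time Jacobian cancel. Hence $\|v^j_n\|_{L^6_{t,x}}=\|D^{1/3}S(t)[e^{i(\cdot)a^j_n}\phi^j]\|_{L^6_{t,x}}$ with $a^j_n:=h^j_n\xi^j_n$. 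For a low--frequency profile ($\xi^j_n\equiv 0$) this is the constant $\|D^{1/3}S(t)\phi^j\|_{L^6_{t,x}}$, so $A^j\le\mathbf{S}\|\phi^j\|_{L^2}$. For a high--frequency profile ($|h^j_n\xi^j_n|\to\infty$, hence $a^j_n\to\infty$), Proposition~\ref{prop:convg-to-schr} shows that after removing the modulation and the group--velocity translation and rescaling time by $(6(a^j_n)^2)^{-1}$ the profile converges to $e^{-it\Delta}\phi^j$; the scalar $6^{1/6}|a^j_n|^{1/3}$ produced by $D^{1/3}$ acting on $e^{i(\cdot)a^j_n}\phi^j$ cancels exactly against the rescaling Jacobian, so $A^j=\|e^{-it\Delta}\phi^j\|_{L^6_{t,x}}\le\mathbf{S}_{schr}\|\phi^j\|_{L^2}$. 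Applying the same computation with $h^j_n\equiv1$ and $\xi^j_n=a_n\to\infty$ shows $\|e^{-it\Delta}\phi\|_{L^6}\le\mathbf{S}\|\phi\|_{L^2}$ for all $\phi$, that is $\mathbf{S}_{schr}\le\mathbf{S}$; consequently $A^j\le\mathbf{S}\|\phi^j\|_{L^2}$ in both regimes.

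Writing $b_j:=\|\phi^j\|^2_{L^2}$ and combining with the elementary bound $\sum_j b_j^3\le(\sum_j b_j)^3$ and with $\sum_j b_j\le1$ from \eqref{eq:L2-orthogonality} yields the chain
$$\mathbf{S}^6\ \le\ \sum_{j\ge1}(A^j)^6\ \le\ \mathbf{S}^6\sum_{j\ge1}b_j^3\ \le\ \mathbf{S}^6\Big(\sum_{j\ge1}b_j\Big)^3\ \le\ \mathbf{S}^6,$$
so every inequality is an equality. Equality in $\sum_j b_j^3=(\sum_j b_j)^3$ with $b_j\ge0$ forces exactly one nonzero profile, which I relabel $\phi^1$; then $b_1=1$, i.e. $\|\phi^1\|_{L^2}=1$ (whence \eqref{eq:L2-orthogonality} also forces $w^l_n\to0$ in $L^2$), and $A^1=\mathbf{S}$. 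If $\phi^1$ is of low--frequency type, then $\mathbf{S}=A^1=\|D^{1/3}S(t)\phi^1\|_{L^6}$ with $\|\phi^1\|_{L^2}=1$, so $\phi^1$ is an extremiser for $\mathbf{S}$ --- the first alternative. If $\phi^1$ is of high--frequency type, I set $a_n:=h^1_n\xi^1_n$, so $|a_n|\to\infty$ and by the invariances above $\|D^{1/3}S(t)[e^{ixa_n}\phi^1]\|_{L^6}=\|v^1_n\|_{L^6}\to\mathbf{S}$, which is the second alternative with $f=\phi^1$; moreover $\mathbf{S}=A^1=\|e^{-it\Delta}\phi^1\|_{L^6}\le\mathbf{S}_{schr}\le\mathbf{S}$ forces $\mathbf{S}=\mathbf{S}_{schr}$, so $\phi^1$ extremizes $\mathbf{S}_{schr}$ and hence, by the known classification of extremisers for the one--dimensional $L^6$ Schr\"odinger Strichartz inequality \cite{Foschi:2007:maxi-strichartz-2d, Hundertmark-Zharnitsky:2006:maximizers-Strichartz-low-dimensions}, is a Gaussian up to the symmetries of \eqref{eq:schr-max}.

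The step I expect to be the main obstacle is the high--frequency regime: one must promote Proposition~\ref{prop:convg-to-schr} to a statement about convergence of $L^6_{t,x}$--norms and, crucially, check that the nonhomogeneous multiplier $D^{1/3}$ --- whose symbol behaves like $|\xi|^{1/3}$ and therefore grows at the relevant frequency scale --- conspires with the anisotropic space--time rescaling to leave exactly the constant $1$ in front of $\|e^{-it\Delta}\phi^1\|_{L^6}$; any stray constant would wreck the identity $\mathbf{S}=\mathbf{S}_{schr}$. A secondary technical point is the interchange of the limits $n\to\infty$ and $l\to\infty$ in the first display, which is precisely where the asymptotic Strichartz orthogonality \eqref{eq:strichartz-space-orthogonality} and the decay \eqref{eq:error-term} of the remainder are used.
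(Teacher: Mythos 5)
Your proof is correct and follows essentially the same approach as the paper: apply the profile decomposition to a maximizing sequence, use the asymptotic $L^{6}$ orthogonality together with the scaling/translation invariance of $\|D^{1/3}S(t)\cdot\|_{L^{6}}$ (for $\mu=0$) to reduce to $\mathbf{S}^{6}\le\sum_{j}A_{j}^{6}\le\mathbf{S}^{6}\sum_{j}\|\phi^{j}\|_{L^{2}}^{6}\le\mathbf{S}^{6}(\sum_{j}\|\phi^{j}\|_{L^{2}}^{2})^{3}\le\mathbf{S}^{6}$, force a single profile by the $\ell^{3}\hookrightarrow\ell^{1}$ inclusion, split into the two frequency regimes, and invoke Proposition~\ref{prop:convg-to-schr} and the Foschi/Hundertmark--Zharnitsky classification in the high--frequency case. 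The only (minor) deviation is that you derive $\mathbf{S}_{\mathrm{schr}}\le\mathbf{S}$ by applying Proposition~\ref{prop:convg-to-schr} to an arbitrary $\phi$, whereas the paper applies it to the known Gaussian extremizer $\phi_{0}=e^{-|x|^{2}}$; your route is marginally more self-contained but the conclusion is identical.
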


\begin{remark}We may test ${\|D^{1/3}S(t)f\|_{L^6_{t,x}}}/{\|f\|_{L^2}}$ against a few numerical examples such as $e^{-|x|^2}$ or $(1+|x|)^{-\alpha}$ for $\alpha>1/2$ to find out whether there would hold $\mathbf{S}>\mathbf{S}_{schr}$ in order to rule out the second alternative in Theorem \ref{thm:dichotomy}; we may also formulate an analogous statement for $S_\mu$ with $\mu>0$; but we will not pursue these interesting matters here.
\end{remark}

This paper is organized as follows. In Section \ref{sec:notations}, we introduce some notations. In Section \ref{sec:strichartz-estimates}, we prove Lemma \ref{le:refined-strichartz}. In section \ref{sec:local-restriction-estimates}, we prove certain localized restriction estimates. In Sections \ref{sec:the-decomposition} and \ref{sec:an-estimate}, by Lemmas \ref{le:refined-strichartz} and \ref{L:loaclrestrict}, we establish the linear profile decomposition theorem \ref{thm:linear-profile} for a sequence of functions $(u_n)_{n\ge 1}$ which are bounded in $L^2$. In Section \ref{sec:dichotomy-extremisers}, we establish the dichotomy result Theorem \ref{thm:dichotomy}.

\noindent{\bf ACKNOWLEDGEMENT.}
Part of this work was done during the first author's stay in Johns Hopkins University, he would like to thank people in department of Mathematics especially Christopher Sogge. Part of this work was supported while the second author stayed at IHP and he wants to thank this institution for its hospitality. S. Shao was supported by National Science Foundation under agreement No. DMS-0635607 during the early preparations of this work. Any opinions, findings and conclusions or recommendations expressed in this paper are those of the authors and do not reflect necessarily the views of the National Science Foundation.


\section{Notations}\label{sec:notations}
We use $X\lesssim Y$, $Y\gtrsim X$, or $X=O(Y)$ to denote the estimate $|X|\le
C Y$ for some constant $0<C<\infty$, which will not depend on the functions. If $X\lesssim Y$ and $Y\lesssim X$ we will write $X\sim Y$. If the constant $C$ depends on a special parameter, we shall denote it explicitly by subscripts.

We define the space-time norm $L^q_tL^r_x$ of $f$ on $\mathbb{R}\times \mathbb{R}$ by
$$\|f\|_{L^q_tL^r_x(\mathbb{R}\times\mathbb{R})}:=\left(\int_{\mathbb{R}}\left(\int_{\mathbb{R}}
|f(t,x)|^{r}d\,x\right)^{q/r}d\,t\right)^{1/q},$$ with the usual modifications when $q$ or $r$ are
equal to infinity, or when the domain $\mathbb{R}\times\mathbb{R}$ is replaced by a small space-time region. When $q=r$, we abbreviate it by $L^{q}_{t, x}$. Unless specified, all the space-time integrations are taken over $\mathbb{R}\times \mathbb{R}$, and all the spatial integrations over $\mathbb{R}$.

We fix the notation that $\lim_{n\to \infty}$ should be understood as $\limsup_{n\to \infty}$ throughout this paper.

The spatial Fourier transform is defined via
$$\widehat{f}(\xi):=\int_\mathbb{R} e^{-ix\xi}f(x)dx;$$
the space-time Fourier transform is defined analogously.

The inner product $\langle\cdot,\cdot\rangle_{L^2}$ in the Hilbert space $L^2$ is defined via $$\langle f,g\rangle_{L^2}:=\int_{\mathbb{R}}f(x)\overline{g}(x)dx,$$
where $\overline{g}$ denotes the usual complex conjugate of $g$ in the complex plane $\mathbb{C}$.

\section{The refinement of the Strihcartz inequality}\label{sec:strichartz-estimates}
In this section we prove Lemma \ref{le:refined-strichartz}. We first introduce the notion of Whitney decomposition
as in \cite{Killip-Visan:2008:clay-lecture-notes}.
\begin{definition}
Given $j\in \mathbb{Z}$, we denote by $\mathcal{D}_j$ the set of all dyadic intervals in $\mathbb{R}$ of
length $2^j$: $$\mathcal{D}_j:=\{2^j[k,k+1):k\in\mathbb{Z}\}.$$ We also write $\mathcal{D}:=\cup_{j\in \mathbb{Z}}\mathcal{D}_j$.
Given $I\in \mathcal{D}$, we define $f_I$ by $\widehat{f}_I=\widehat{f}\;1_I$ where $1_I$ denotes the
characteristic function on $I$.
\end{definition}
Given two distinct $\xi,\xi'\in\mathbb{R}$, there is a
unique maximal pair of dyadic intervals $I,\,I'\in\mathcal{D}$ such that
$$\xi\in I,\xi'\in I'\;,\;|I|=|I'|\;,\;{\rm dist}(I,I')\geq 4|I|,$$ where
${\rm dist}(I,I')$ denotes the distance between $I$ and $I'$, and $|I|$ denotes the length of the dyadic interval $I$. Let $\mathcal{F}$ denote all such pairs as $\xi\neq\xi'$ varies over $\mathbb{R}\times\mathbb{R}$. Then we have
\begin{equation}
\sum_{(I,I')\in\mathcal{F}}1_I(\xi)1_{I'}(\xi)=1, {\rm for\;\;a.e.}\; (\xi,\xi')\in \mathbb{R}\times\mathbb{R}.
\end{equation}
Since $I$ and $I'$ are maximal, ${\rm dist}(I,I')\leq 10 |I|$. This shows that for a given $I\in\mathcal{D}$,
there exists a bounded number of $I'$ so that $(I,I')\in \mathcal{F}$, i.e.
$$\forall I\in\mathcal{D}\;,\;\#\{I':(I,I')\in\mathcal{F}\}\lesssim 1.$$

\begin{proof}[Proof of Lemma \ref{le:refined-strichartz}.]
Given $p>1$, we normalize $\sup_{\tau\in\mathbb{R}}|\tau|^{1/2-1/p}\|\widehat{f}\|_{L^p(\tau)}=1$.
Then for all dyadic intervals $I\in\mathcal{D}$,
\begin{equation}\label{E:dyadicest1}
\int_{I}|\widehat{f}|^p d\xi\leq |I|^{1-p/2}.
\end{equation}
Let $\phi_\mu(\xi):=\xi^4+\mu\xi^2$. Then
$$
(D_\mu)^{1/3}_x S_\mu(t)f=\int_{\mathbb{R}}e^{it\phi_\mu(\xi)+ix\xi}|\mu+6\xi^2|^{1/6}\widehat{f}(\xi)d\xi,
$$
then
$$
|D^{1/3}_\mu S_\mu(t)f|^2=\int_{\mathbb{R}}\int_{\mathbb{R}}e^{it(\phi_\mu(\xi)-\phi_\mu(\eta))+ix(\xi-\eta)}
\widehat{f}(\xi)\overline{\widehat{f}}(\eta)|\mu+6\xi^2|^{1/6}|\mu+6\eta^2|^{1/6}d\xi d\eta.
$$
Squaring the left hand side of~\eqref{eq:refined-strichartz}, we see it suffices to prove
$$
\|\int_{\mathbb{R}}\int_{\mathbb{R}}e^{it(\phi_\mu(\xi)-\phi_\mu(\eta))+ix(\xi-\eta)}
\widehat{f}(\xi)\overline{\widehat{f}}(\eta)|\mu+6\xi^2|^{1/6}|\mu+6\eta^2|^{1/6}d\xi d\eta\|_{L^3_{t,x}}\lesssim\|\widehat{f}\|^{4/3}_{L^2}.
$$
Let $u=\phi_\mu(\xi)-\phi_\mu(\eta),\,v=\xi-\eta$. By using the Hausdorff-Young inequality in both $t$ and $x$, we then have
$$\begin{aligned}
&\|D^{1/3}_\mu S_\mu(t)f\|^2_{L^6_{t,x}}\\
&=\|\int_{\mathbb{R}}\int_{\mathbb{R}}e^{itu+ixv}
\widehat{f}(\xi)\overline{\widehat{f}}(\eta)\frac{|\mu+6\xi^2|^{1/6}|\mu+6\eta^2|^{1/6}}{|\phi_\mu^\prime(\xi)
-\phi_\mu^\prime(\eta)|}dudv\|_{L^3_{t,x}}\\
&\leq C(\int_{\mathbb{R}}\int_{\mathbb{R}}\bigl|\widehat{f}(\xi)|\mu+6\xi^2|^{1/6}\bigr|^{3/2}
\bigl|\widehat{f}(\eta)|\mu+6\eta^2|^{1/6}\bigr|^{3/2}\frac{d\xi d\eta}{|\phi_\mu^\prime(\xi)-\phi_\mu^\prime(\eta)|^{1/2}})^{2/3}
\end{aligned}$$
where $\phi_\mu^\prime(\xi)-\phi_\mu^\prime(\eta)=2(\xi-\eta)(\mu+2(\xi^2+\xi\eta+\eta^2))$.

We restrict to the case where $\xi, \eta\ge 0$ by symmetry; in this case, $$\frac{\left((\mu+6\xi^2)(\mu+6\eta^2)\right)^{1/4}}{\vert\phi_\mu^\prime(\xi)-
\phi_\mu^\prime(\eta)\vert^\frac{1}{2}}\lesssim
\frac{1}{\vert\xi-\eta\vert^\frac{1}{2}}.$$
Then it reduces to proving
$$
\int\int\frac{|\widehat{f}(\xi)\widehat{f}(\eta)|^{3/2}}{|\xi-\eta|^{1/2}}d\xi d\eta\lesssim \int |\widehat{f}|^2d\xi.
$$
In view of the above inequality, we thus assume $\widehat{f}\geq 0$ from now on. By Whitney decomposition we have
$$
\widehat{f}(\xi)\widehat{f}(\eta)=\sum_{I,I'\in\mathcal{F}}\widehat{f}_{I}(\xi)\widehat{f}_{I}(\eta)
,\;\;{\rm for\;\;a.e.}\;\; (\xi,\eta)\in\mathbb{R}\times\mathbb{R}$$ and
$$\forall\; (\xi,\eta)\in I\times I'\;\;{\rm with}\;\;(I,I')\in\mathcal{F},|\xi-\eta|\sim |I|.
$$
Choose a slightly larger dyadic interval containing both $I$ and $I'$ but still of length comparable to
that of $I$, and denote it again by $I$.  We have therefore reduced our problem to proving

\begin{equation}\label{E:dyadicest2}
\sum_{I\in\mathcal{D}}\frac{(\int\widehat{f}~^{3/2}_{I}d\xi)^2}{|I|^{1/2}}\lesssim \int\widehat{f}~^2d\xi.
\end{equation}

To prove~\eqref{E:dyadicest2}, we need a further decomposition to $f_I=\sum_{n\in\mathbb{Z}}f_{n,I}$,
here $f_{n,I}$ is
defined by
$$\widehat{f}_{n,I}=\widehat{f}\;1_{\{\xi:2^n|I|^{-1/2}\leq\widehat{f}(\xi)\leq 2^{n+1}|I|^{-1/2}\}}.$$
By the Cauchy-Schwartz inequality, for any $\varepsilon_1 >0$
$$
\Big(\int \widehat{f}_I~^{3/2}d\xi\Big)^2=\Big(\sum_{n\in\mathbb{Z}}\int\widehat{f}~^{3/2}_{n,I} d\xi\Big)^2
\lesssim_{\varepsilon_1}\sum_{n\in\mathbb{Z}}2^{|n|\varepsilon_1}\Big(\int\widehat{f}~^{3/2}_{n,I} d\xi\Big)^2.
$$
The $\varepsilon_1$ we need will be a number less than $\varepsilon$ in~\eqref{E:dyadiclp}.
By the convergence of geometric series,~\eqref{E:dyadicest2} is a consequence of the following
\begin{equation}\label{E:dyadiclp}
\sum_{I\in\mathcal{D}}\frac{(\int \widehat{f}~^{3/2}_{n,I} d\xi)~^2}{|I|^{1/2}}\lesssim
2^{-|n|\varepsilon}\int\widehat{f}~^2d\xi\;,\,\text{ for some}\,\varepsilon>0 \text{ and all }\,n>0.
\end{equation}

By the Cauchy-Schwartz inequality,
$$\Big(\int\widehat{f}~^{3/2}_{n,I}d\xi\Big)^2\lesssim \int\widehat{f}~^2_{n,I} d\xi \int f_{n,I}d\xi.$$
When $n\geq 0$, by the Chebyshev's inequality and~\eqref{E:dyadicest1},
$$\begin{aligned}
\int \widehat{f}_{n,I}d\xi & \lesssim 2^n|I|^{-1/2}|\{\xi:\widehat{f}(\xi)\geq 2^n|I|^{-1/2}\}|\\
&\lesssim 2^n |I|^{-1/2}\frac{\int \widehat{f}~^p d\xi}{2^{np}|I|^{-p/2}}\\
&\lesssim 2^{-|n|(p-1)}|I|^{1/2}
\end{aligned}$$
for any $p>1$. On the other hand, when $n<0$,
$$\int\widehat{f}_{n,I}d\xi\lesssim 2^n|I|^{-1/2}|I|=2^{-|n|}|I|^{1/2}.$$
Combining these estimates, there exist an $\varepsilon>0$ such that
$$
\sum_{I\in\mathcal{D}}\frac{(\int \widehat{f}~^{3/2}_{n,I}d\xi)^2}{|I|^{1/2}}\lesssim
2^{-|n|\varepsilon}\sum_{I\in\mathcal{D}}\int\widehat{f}~^2_{n,I}d\xi
$$
Interchanging the order of summation, we obtain
$$\sum_{I\in\mathcal{D}}\int \widehat{f}~^2_{n,I}d\xi =\sum_{j\in\mathcal{Z}}\sum_{I\in\mathcal{D}_j}\int \widehat{f}~^21_{\{\xi\in I:\widehat{f}\sim 2^{n-j/2}\}}d\xi=\int_{\mathbb{R}}\sum_{j:\widehat{f}\sim 2^{n-j/2}}\widehat{f}~^2d\xi\lesssim\int\widehat{f}~^2d\xi. $$
Thus we get~\eqref{E:dyadicest2} from above two inequalities.
\end{proof}

\section{Localized restriction estimates}\label{sec:local-restriction-estimates}
\begin{lemma}\label{L:loaclrestrict}
For $4<q<6$, $0\le \mu$ and $\widehat{G}\in L^{\infty}(B(\xi_0,R))$ for some $R>0$, we have
\begin{equation}\label{E:localrestrict1}
\|D_\mu^\frac{2}{q}S_\mu(t)G\|_{L^q_{t,x}}\leq C_{q,R}\|\widehat{G}\|_{L^{\infty}}.
\end{equation}
\end{lemma}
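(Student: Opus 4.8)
The plan is to prove the estimate \eqref{E:localrestrict1} by interpolating a good $L^\infty_{t,x}$ bound against a $L^4_{t,x}$ bound, both obtained by exploiting the compact frequency support of $\widehat G$ together with the local dispersive decay for $S_\mu$. Write $G_R$ for the part of $G$ with $\widehat{G}$ supported in $B(\xi_0,R)$. The trivial bound $\|S_\mu(t)G\|_{L^\infty_{t,x}}\lesssim \|\widehat G\|_{L^1}\lesssim R\,\|\widehat G\|_{L^\infty}$ is too crude to interpolate with an $L^4$-type estimate and recover a $q$ strictly between $4$ and $6$; instead, the efficient route is to first reduce to a single dyadic frequency block. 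Partition $B(\xi_0,R)$ into the regions where $|\mu+6\xi^2|\sim 2^{2k}$, $k\in\mathbb{Z}$ with $2^k\lesssim \max(R,\sqrt\mu)$; on each such block the multiplier $|\mu+6\xi^2|^{2/q}$ is essentially constant, of size $2^{4k/q}$, and $\phi_\mu''(\xi)=2(\mu+6\xi^2)\sim 2^{2k}$ is non-degenerate and of definite sign.

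First, on a fixed block of frequency length $\ell_k$ (with $\ell_k\lesssim 2^{k}$ when $\mu$ is large, $\ell_k\lesssim R$ in general), I would record two estimates. The $L^4_{t,x}$ estimate comes from the symmetric Strichartz inequality \eqref{eq:symmetric-strichartz} — or more precisely from the refinement in Lemma \ref{le:refined-strichartz} — localized to the block: since $\phi_\mu''\sim 2^{2k}$ there, the change of variables $u=\phi_\mu(\xi)-\phi_\mu(\eta)$, $v=\xi-\eta$ (exactly as in the proof of Lemma \ref{le:refined-strichartz}) together with Hausdorff--Young yields, after accounting for the size of the multiplier, a bound of the shape $\|D_\mu^{1/3}S_\mu(t)G_k\|_{L^6_{t,x}}\lesssim 2^{(\text{power})k}\|\widehat G\|_{L^2}\lesssim 2^{(\text{power})k}\ell_k^{1/2}\|\widehat G\|_{L^\infty}$. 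The $L^\infty_{t,x}$ estimate is just $\|S_\mu(t)G_k\|_{L^\infty_{t,x}}\le \|\widehat{G_k}\|_{L^1}\lesssim \ell_k\|\widehat G\|_{L^\infty}$, and $\|D_\mu^{2/q}\cdot\|$ multiplies this by $2^{4k/q}$. Interpolating these two (with the correct $L^6$/$L^\infty$ weights producing exponent $q$, $4<q<6$, so we never actually need a genuine $L^4$ endpoint — $L^6$ suffices) gives a per-block bound $\|D_\mu^{2/q}S_\mu(t)G_k\|_{L^q_{t,x}}\lesssim 2^{\beta(q)k}\,\ell_k^{\gamma(q)}\,\|\widehat G\|_{L^\infty}$ for explicit exponents $\beta(q),\gamma(q)$.

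Second, I would sum over the dyadic blocks $k$. Because $q>4$, the $L^q_{t,x}$ norm is $\ell^q$-subadditive over the (essentially orthogonal, since they live on disjoint frequency intervals whose images under $\phi_\mu$ have controlled overlap) blocks — or one can simply use the triangle inequality in $L^q_{t,x}$ and the fact that there are only $O(\log(2+R+\sqrt\mu))$ relevant values of $k$, absorbing the logarithm into $C_{q,R}$. The key point is that the combination of powers $2^{\beta(q)k}\ell_k^{\gamma(q)}$ is summable in $k$ once $q<6$: the gain from $q<6$ is exactly what makes the geometric series in $k$ (over the range $2^k\lesssim R$, or $2^k\sim\sqrt\mu$ down to $2^k\sim$ the spacing forced by $R$) converge, and the upper cutoff $2^k\lesssim \max(R,\sqrt\mu)$ together with $\widehat G\in L^\infty(B(\xi_0,R))$ keeps everything finite with a constant depending only on $q$ and $R$ (and harmlessly on $\mu$, which is fixed). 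Note the restriction $q>4$ is also needed so that the exponent on the low-frequency side does not blow up — near $\xi=0$ the weight $|\mu+6\xi^2|^{2/q}$ is bounded (by $\mu^{2/q}$ or, when $\mu=0$, degenerates like $|\xi|^{4/q}$, which is integrable against the measure only because $q>4$... actually it is the interplay with the $\ell_k^{1/2}$ from the $L^\infty$ bound of $\widehat G$ that needs $q>4$).

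The main obstacle I anticipate is getting the block-wise interpolation exponents to line up so that the resulting geometric series in $k$ genuinely converges for every $q\in(4,6)$, in both the low-frequency regime $2^k\ll\sqrt\mu$ (where $\ell_k$ is controlled by $R$ rather than $2^k$, and the weight $|\mu+6\xi^2|^{2/q}$ is roughly constant $\sim\mu^{2/q}$) and the high-frequency regime $2^k\gtrsim\sqrt\mu$ (where $\ell_k\sim 2^k$ and $\mu$ is negligible). I would handle these two regimes separately: in the high-frequency regime the estimate essentially reduces, after rescaling, to the $\mu=0$ case where scaling invariance makes the bookkeeping transparent; in the low-frequency regime the finitely many blocks (only $O(\log(R/\sqrt\mu))$ of them, or $O(1)$ if $R\lesssim\sqrt\mu$) contribute a bounded total directly. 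A secondary technical point is justifying the localized form of the refined Strichartz inequality on a single frequency block with the correct dependence of the constant on the block position — but this is a routine rescaling of the argument already carried out in Section \ref{sec:strichartz-estimates}, since on each block $\phi_\mu$ is a smooth function with second derivative bounded above and below, which is exactly the hypothesis under which that argument runs.
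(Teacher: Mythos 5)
The central interpolation step in your plan does not work, and this is fatal rather than cosmetic. On a fixed frequency block you propose to prove an $L^6_{t,x}$ estimate (from Strichartz) and an $L^\infty_{t,x}$ estimate (from $\widehat{G_k}\in L^1$), and then interpolate to reach $L^q_{t,x}$ with $4<q<6$. But interpolation between a target space $L^6$ and a target space $L^\infty$ only produces target spaces $L^q$ with $q\ge 6$; there is no way to reach exponents \emph{below} $6$ from this pair of endpoints. To land in $4<q<6$ you would need an endpoint strictly below $q$, e.g.\ an $L^4_{t,x}$ estimate, which is exactly what you acknowledge you do not have (in one dimension there is no $L^4$ Strichartz of this form for the fourth-order flow). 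You also write that the per-block bound ``comes from the symmetric Strichartz inequality'' but then display an $L^6$ norm while calling it an $L^4$ estimate, which suggests the two exponents have been conflated. A secondary, but real, concern is that the lemma is applied later in the paper with $\mu$ replaced by $\rho_n^{-2}\mu$, a quantity that varies with $n$, so the constant must be uniform in $\mu$; your dyadic count ``$O(\log(2+R+\sqrt{\mu}))$'' would then reintroduce an uncontrolled $\mu$-dependence that the statement $C_{q,R}$ does not permit.

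The paper's argument is both shorter and avoids these issues entirely: it makes no dyadic decomposition and no interpolation. One squares $D_\mu^{2/q}S_\mu(t)G$ to write $\|D_\mu^{2/q}S_\mu(t)G\|_{L^q_{t,x}}^2$ as an $L^r_{t,x}$ norm ($q=2r$, $2<r<3$) of a bilinear oscillatory integral, changes variables $u=\xi_1-\xi_2$, $v=\phi_\mu(\xi_1)-\phi_\mu(\xi_2)$, applies Hausdorff--Young (this is where $r>2$, i.e.\ $q>4$, enters), changes back picking up the Jacobian $|\phi_\mu'(\xi_1)-\phi_\mu'(\xi_2)|$, and uses the pointwise bound
\begin{equation*}
\frac{\big[(\mu+6\xi_1^2)(\mu+6\xi_2^2)\big]^{r'/(2r)}}{|\phi_\mu'(\xi_1)-\phi_\mu'(\xi_2)|^{r'-1}}\lesssim \frac{1}{|\xi_1-\xi_2|^{r'-1}},
\end{equation*}
uniform in $\mu\ge 0$, together with $\widehat{G}\in L^\infty(B(\xi_0,R))$, to reduce to the elementary convergent integral $\int_0^R\!\int_0^{\xi_2}|\xi_1-\xi_2|^{-(r'-1)}\,d\xi_1\,d\xi_2\lesssim_R 1$ (finite precisely because $r'-1<1$, i.e.\ $q>4$). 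The condition $q>4$ is thus not about a low-frequency weight blowing up, as you speculate; it is exactly the Hausdorff--Young threshold and the integrability of the diagonal singularity. If you want a genuine sub-$L^6$ estimate out of an $L^\infty$ bound on $\widehat G$ over a compact set, this direct bilinear route is the mechanism; dyadic-block interpolation between $L^6$ and $L^\infty$ is structurally incapable of reaching $q<6$.
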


\begin{proof}
We may assume that $\xi_1,\,\xi_2\ge 0$ in the proof. Recalling that $\phi_\mu(\xi)=\xi^4+\mu \xi^2$, we observe that, for $1<r<\infty$ and $\mu\ge 0$,
\begin{equation}\label{Computation}
\begin{split}
\frac{\left[(\mu+6\xi_1^2)(\mu+6\xi_2^2)\right]^\frac{r^\prime}{2r}}{\vert\phi_\mu^\prime(\xi_1)-
\phi_\mu^\prime(\xi_2)\vert^{r^\prime-1}}&=\frac{1}{\left[2\vert \xi_1-\xi_2\vert\right]^{r^\prime-1}}\left(\frac{\left[(\mu+6\xi_1^2)(\mu+6\xi_2^2)\right]^\frac{1}{2}}
{2(\xi_1^2+\xi_1\xi_2+\xi_2^2)+\mu}\right)^{r^\prime-1}\\
&\lesssim \frac{1}{\vert \xi_1-\xi_2\vert^{r^\prime-1}}.
\end{split}
\end{equation}
Let $q=2r$ with $2<r<3.$ To prove ~\eqref{E:localrestrict1} is equivalent to proving
$$\begin{aligned}
&\Big\|\int_{B(\xi_0,R)}\int_{B(\xi_0,R)}e^{ix(\xi_1-\xi_2)+it(\phi_\mu(\xi_1)-\phi_\mu(\xi_2))}
|(\mu+6\xi_1^2)(\mu+6\xi_2^2)|^{1/q}\hat{G}(\xi_1)\bar{\hat{G}}(\xi_2)d\xi_1d\xi_2\Big\|_{L^r_{t,x}}\\
&\le C_{q,R}\|\hat{G}\|^2_{L^{\infty}(B(\xi_0,R))}
\end{aligned}$$
Let $u:=\xi_1-\xi_2\;,\;v:=\phi_\mu(\xi_1)-\phi_\mu(\xi_2)$ and denote the resulting image of $B(\xi_0,R)\times B(\xi_0,R)$ by $\Omega$ under change of variables. Since $r>2$, by the Hausdorff-Young inequality, we see the left hand side of the inequality above is bounded by
$$C\Big(\int_{\Omega}\Big||(\mu+6\xi_1^2)(\mu+6\xi_2^2)|^\frac{1}{2r}\frac{\hat{G}(\xi_1)\hat{G}(\xi_2)}
{|\phi_\mu^\prime(\xi_1)-\phi_\mu^\prime(\xi_2)|}\Big|^{r'}dudv\Big)^{1/r'}.$$ The constant $C>0$ is induced because of change of variables. Changing the variables back, we obtain
$$C\bigl(\int_{B(\xi_0,R)\times B(\xi_0,R)}\frac{|(\mu+6\xi_1^2)(\mu+6\xi_2^2)|^\frac{r^\prime}{2r}}{|\phi_\mu^\prime(\xi_1)-\phi_\mu^\prime(\xi_2)|^{r'-1}}
|\hat{G}(\xi_1)\hat{G}(\xi_2)|^{r'} d\xi_1 d\xi_2\bigr)^{1/r^\prime}.$$
We may restrict to the region where $0\le\xi_1\le\xi_2$. In this case, using \eqref{Computation}, we see that after a change of variables,
$$\int_{0}^{R}\int_0^{\xi_2}
\frac{1}{\vert\xi_1-\xi_2\vert^{r^\prime-1}}d\xi_1d\xi_2
\lesssim_R1.$$
Thus we obtain ~\eqref{E:localrestrict1}; the proof of this lemma is complete.
\end{proof}


\section{The Linear profile decomposition}\label{sec:the-decomposition}
By the refined Strichartz estimate \eqref{eq:refined-strichartz}, we extract the frequency and scaling parameters. It closely follows the approach in \cite{Carles-Keraani:2007:profile-schrod-1d, Shao:2008:linear-profile-Airy-Maximizer-Airy-Strichartz}.

\begin{lemma}\label{L:scalefre}
Let $(u_n)_{n\geq 1}$ be a sequence of complex valued functions with $\|u_n\|_{L^2}\leq 1$. Then up to a subsequence, for any $\delta>0$, there exists $N=N(\delta)$, a family of $(\rho^j_n,\xi^j_n)_{1\leq j\leq N}\in(0,\infty)\times \mathbb{R}$ and a family $(f^j_n)_{1\leq j\leq N\atop n\geq 1}$ of $L^2$ bounded sequences such that
\begin{equation}\label{E:presum}
u_n=\sum_{j=1}^{N}f^j_n+q^N_n
\end{equation}
and there exists a compact set $K=K(N)$ in $\mathbb{R}$, for every $1\leq j\leq N$,
\begin{equation}\label{E:preprofilesf}
 \sqrt\rho^j_n|\widehat{f}^j_n(\rho^j_n\xi+\xi^j_n)| \leq C_{\delta}1_{K}(\xi).
\end{equation}
Here the sequence $(\rho^j_n,\xi^j_n)$ satisfies that,  if $j\neq k$,
\begin{equation}\label{E:preconstraintone}
\lim_{n\to\infty}(\frac{\rho^j_n}{\rho^k_n}+\frac{\rho^k_n}{\rho^j_n}+\frac{|\xi^j_n-\xi^k_n|}{\rho^j_n})=\infty.
\end{equation}
 The remainder term satisfies, for any $N\ge 1$,
\begin{equation}\label{E:preremainder}
\lim_{n\to\infty}\|D_\mu^{1/3}S_\mu(t)q^N_{n}\|_{L^6_{t,x}}\leq\delta ,
\end{equation}
furthermore, for any $N\ge 1$, \begin{equation}\label{E:preorthogonal}
\lim_{n\rightarrow\infty}\Big(\|u_n\|^2_{L^2}-\Big(\sum_{j=1}^N\|f^j_n\|^2_{L^2}+\|q^N_n\|^2_{L^2} \Big) \Big)=0.
\end{equation}
\end{lemma}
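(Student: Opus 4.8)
The plan is to run the by-now-standard greedy extraction scheme driven by the refined Strichartz inequality \eqref{eq:refined-strichartz}, in the spirit of \cite{Carles-Keraani:2007:profile-schrod-1d, Shao:2008:linear-profile-Airy-Maximizer-Airy-Strichartz}. Fix $\delta>0$ and an auxiliary exponent $p\in(1,2)$; the constant in Lemma \ref{le:refined-strichartz} is uniform in $\mu\ge 0$, so all constants below will be $\mu$-independent. Set $q^0_n:=u_n$ and suppose inductively that $q^{j-1}_n$ has been produced with $\|q^{j-1}_n\|_{L^2}\le 1$. If $\limsup_{n\to\infty}\|D^{1/3}_\mu S_\mu(t)q^{j-1}_n\|_{L^6_{t,x}}\le\delta$, stop and set $N:=j-1$. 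Otherwise, after passing to a subsequence, $\|D^{1/3}_\mu S_\mu(t)q^{j-1}_n\|_{L^6_{t,x}}>\delta$ for all $n$, and \eqref{eq:refined-strichartz} together with $\|q^{j-1}_n\|_{L^2}\le1$ gives a constant $c_0=c_0(p)>0$ and intervals $\tau^j_n$ with $|\tau^j_n|^{\frac12-\frac1p}\|\widehat{q^{j-1}_n}\|_{L^p(\tau^j_n)}\ge c_0\delta^3$. Put $\rho^j_n:=|\tau^j_n|$, let $\xi^j_n$ be the center of $\tau^j_n$, and set $g^j_n(\xi):=\sqrt{\rho^j_n}\,\widehat{q^{j-1}_n}(\rho^j_n\xi+\xi^j_n)$, so that $\|g^j_n\|_{L^2}\lesssim1$ while $\|g^j_n\|_{L^p([-\frac12,\frac12])}\ge c_0\delta^3$.

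Next I would produce each profile by one frequency cut and one amplitude cut. From $\|g^j_n\|_{L^2}\lesssim1$, Chebyshev's and H\"older's inequalities give $\|g^j_n\,1_{\{|g^j_n|>C\}}\|_{L^p([-\frac12,\frac12])}\lesssim C^{-(2-p)/p}$, so we may fix $C_\delta=C_\delta(\delta,p)$ so large that this is $\le\frac12 c_0\delta^3$. Put $h^j_n:=g^j_n\,1_{[-\frac12,\frac12]}\,1_{\{|g^j_n|\le C_\delta\}}$ and define $f^j_n$ by $\widehat{f^j_n}(\eta):=(\rho^j_n)^{-1/2}h^j_n((\eta-\xi^j_n)/\rho^j_n)$ and $q^j_n:=q^{j-1}_n-f^j_n$. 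Then $\widehat{f^j_n}$ is supported in $\tau^j_n$ and $\sqrt{\rho^j_n}\,|\widehat{f^j_n}(\rho^j_n\xi+\xi^j_n)|\le C_\delta\,1_{[-\frac12,\frac12]}(\xi)$, which is \eqref{E:preprofilesf} with $K=[-1,1]$. Since $\widehat{f^j_n}=\widehat{q^{j-1}_n}\,1_E$ for the measurable set $E=\{\eta\in\tau^j_n:\sqrt{\rho^j_n}\,|\widehat{q^{j-1}_n}(\eta)|\le C_\delta\}$, the functions $\widehat{f^j_n}$ and $\widehat{q^j_n}=\widehat{q^{j-1}_n}\,1_{E^{c}}$ have disjoint supports, which yields the exact identity $\|q^{j-1}_n\|_{L^2}^2=\|f^j_n\|_{L^2}^2+\|q^j_n\|_{L^2}^2$; iterating gives $\|u_n\|_{L^2}^2=\sum_{j=1}^{N}\|f^j_n\|_{L^2}^2+\|q^N_n\|_{L^2}^2$, which is \eqref{E:preorthogonal} (in fact with equality). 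Moreover $\|h^j_n\|_{L^p([-\frac12,\frac12])}\ge\frac12 c_0\delta^3$, and since $[-\frac12,\frac12]$ has unit measure and $p<2$, H\"older gives $\|f^j_n\|_{L^2}\sim\|h^j_n\|_{L^2}\ge\|h^j_n\|_{L^p([-\frac12,\frac12])}\gtrsim\delta^3$. Combined with $\sum_j\|f^j_n\|_{L^2}^2\le\|u_n\|_{L^2}^2\le1$, this forces the iteration to stop after $N\lesssim\delta^{-6}$ steps; this gives \eqref{E:presum}, and \eqref{E:preremainder} is exactly the stopping rule.

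It remains to make the parameters pairwise orthogonal. As $N$ is finite, pass to a further diagonal subsequence along which $\rho^j_n/\rho^k_n$ and $(\xi^j_n-\xi^k_n)/\rho^j_n$ converge (in $[0,\infty]$ and $[-\infty,\infty]$) for every pair $j\ne k$. Say $j\sim k$ if all three terms of \eqref{E:preconstraintone} stay bounded, i.e.\ $\rho^j_n\sim\rho^k_n$ and $|\xi^j_n-\xi^k_n|\lesssim\rho^j_n$; on the finite set $\{1,\dots,N\}$ this is an equivalence relation, with controlled constants since there are finitely many indices. For a class $\{m_1,\dots,m_s\}$ the intervals $\tau^{m_i}_n$ have comparable lengths and mutually $O(\rho^{m_1}_n)$-separated centers, hence lie in a single interval $I_n$ with $|I_n|\sim\rho^{m_1}_n$; I replace the class by the one profile $\tilde f_n$ with $\widehat{\tilde f_n}=\sum_i\widehat{f^{m_i}_n}$ --- a sum with pairwise disjoint supports inside $I_n$ and with $|\widehat{f^{m_i}_n}|\le C_\delta(\rho^{m_i}_n)^{-1/2}$ --- so that, after rescaling by $|I_n|$ about the center of $I_n$, $\tilde f_n$ is supported in a fixed compact set and bounded by a constant $C'_\delta$. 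Disjoint Fourier supports add in $L^2$, so \eqref{E:preorthogonal} persists; $q^N_n$ is untouched, so \eqref{E:presum} and \eqref{E:preremainder} persist; and two surviving profiles come from distinct classes, so for representatives $m,m'$ the triple in \eqref{E:preconstraintone} tends to $\infty$, and this is not destroyed by replacing $\rho^m_n,\xi^m_n$ with the comparable $|I_n|$ and a center within $O(\rho^m_n)$ of $\xi^m_n$. Thus \eqref{E:preconstraintone} holds for the merged family.

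The step I expect to be the main obstacle is exactly this last one: one has to check that the merging is compatible with every clause --- in particular that the constant $C'_\delta$ and the compact set in \eqref{E:preprofilesf} still depend only on $\delta$ (legitimate, since there are finitely many classes and the relevant ratios are eventually bounded), and that $\sim$ is genuinely transitive with controlled constants on the finite index set. An alternative that avoids merging is to refine the extraction so that, using the pointwise monotonicity $|\widehat{q^m_n}|\le|\widehat{q^{m-1}_n}|$, removing a profile depletes the Fourier $L^p$-concentration at its scale below the extraction threshold and keeps it depleted under subtraction of all later profiles; this is the route taken in \cite{Carles-Keraani:2007:profile-schrod-1d, Shao:2008:linear-profile-Airy-Maximizer-Airy-Strichartz}, and either way the work reduces to the same elementary but delicate estimates. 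Throughout, nothing depends on $\mu$ beyond the $\mu$-uniform constant in \eqref{eq:refined-strichartz}, so the construction is uniform in $\mu\ge0$.
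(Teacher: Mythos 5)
Your argument is correct and is, in substance, the same as the paper's: extract intervals of Fourier $L^p$-concentration using the refined Strichartz inequality \eqref{eq:refined-strichartz}, cut the amplitude on the Fourier side to get boundedness after rescaling, iterate until the Strichartz norm of the remainder drops below $\delta$ (terminating after $O(\delta^{-6})$ steps because each piece has $L^2$ mass $\gtrsim\delta^3$ and the pieces are Fourier-disjoint), and then re-group the extracted pieces to enforce the orthogonality condition \eqref{E:preconstraintone}. The paper makes the specific choice $p=4/3$ where you work with a general $p\in(1,2)$; that is a cosmetic difference. For the re-grouping, you prove that "non-orthogonality" is an equivalence relation on the finite index set and merge by equivalence classes; the paper performs a greedy grouping (lump together everything non-orthogonal to $\gamma^1_n$, then among the rest everything non-orthogonal to the next unmatched $\gamma^{j_0}_n$, etc.), which --- precisely because non-orthogonality is transitive --- produces the same partition. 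Your explicit verification that each equivalence class can be covered by a single interval $I_n$ with comparable length, that the merged $\widehat{f}_n$ stays bounded after rescaling about its center, and that replacing $(\rho^m_n,\xi^m_n)$ by $(|I_n|,\text{center})$ does not destroy \eqref{E:preconstraintone}, spells out details the paper leaves implicit via the displayed estimate on $G^1_n(G^j_n)^{-1}$.

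One small misattribution at the end: you suggest the merging step is something one could avoid by refining the extraction so that earlier subtractions deplete later concentration scales, and cite Carles--Keraani and Shao for this alternative; in fact the present paper, exactly like those references, proceeds by the merging/re-organization you carried out. This does not affect the validity of your proof.
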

\begin{proof}
For $\gamma_n=(\rho_n,\xi_n)\in(0,\infty)\times \mathbb{R}$, we define $G_n:L^2\to L^2$,
$$G_n(f)(\xi)=\sqrt{\rho_n}f(\rho_n\xi+\xi_n).$$
We will induct on the $L^6_{t,x}$ norm. If $\lim_{n\to\infty}\|D^{1/3}_\mu S_\mu(t)u_n\|_{L^6_{t,x}}\leq\delta$ (recall that $\lim_{n\to\infty}f_n$ is understood as $\limsup_{n\to\infty} f_n$ throughout this paper), then we are done. Otherwise, up to a subsequence, we may assume that, for all $n$ in this subsequence,
$$\|D_\mu^{1/3}S_\mu(t)u_n\|_{L^6_{t,x}}> \delta.$$
On the other hand, by Lemma \ref{le:refined-strichartz} with $p=4/3$, we see there exists a family of intervals $I^1_n:=[-\xi^1_n-\rho^1_n,-\xi^1_n+\rho^1_n]$ such that
$$\int_{I^1_n}|\widehat{u}_n|^{4/3}d\xi\geq C_1\delta^4(\rho^1_n)^{1/3} $$
where $C_1$ depends only on the constant in Lemma \ref{le:refined-strichartz}. While for any $A>0$,
$$
\int_{I^1_n\cap\{|\widehat{u}_n|>A\}}|\widehat{u}_n|^{4/3}
d\xi\leq A^{-\frac{2}{3}}\|\widehat{u}_n\|^2_{L^2}.
$$
Let $C_\delta:=(C_1/2)^{-3/2}\delta^{-6}$. Then
$$\int_{I^1_n\cap\{|\widehat{u}_n|\leq C_\delta(\rho_n^1)^{-1/2}\}}|\widehat{u}_n|^{4/3}d\xi\geq \frac{C_1}{2}\delta^4(\rho^1_n)^{1/3}.$$
From H\"older's inequality, we have
$$\int_{I^1_n\cap\{|\widehat{u}_n|\leq C_\delta(\rho_n^1)^{-1/2}\}}|\widehat{u}_n|^{4/3}d\xi\leq \Big(\int_{I^1_n\cap\{|\widehat{u}_n|\leq C_\delta(\rho_n^1)^{-1/2}\}}|\widehat{u}_n|^{2}d\xi\Big)^{2/3}(|I^1_n|)^{1/3}.$$
This yields $$\int_{I^1_n\cap\{|\widehat{u}_n|\leq C_\delta(\rho_n^1)^{-1/2}\}}|\widehat{u}_n|^2d\xi\geq C'\delta^6,$$ where $C'>0$ is some constant depending only on $C_1$.

Define $v^1_n$ and $\gamma^1_n$ by
$$\widehat{v}^1_n:=\widehat{u}_n 1_{I^1_n\cap\{|\widehat{u}_n|\leq C_{\delta}(\rho^1_n)^{-1/2}\}}
\;,\;\gamma^1_n:=(\rho^1_n,\xi^1_n).$$
Then $\|v^1_n\|_{L^2}\geq (C')^{1/2}\delta^3$. Also by definition of $G$, we have
$$|G^1_n(\widehat{v}^1_n(\xi))|=|(\rho^1_n)^{1/2}\widehat{v}^1_n(\rho^1_n\xi+\xi^1_n)|\leq C_{\delta}
1_{[-1,1]}(\xi).$$
We repeat the same argument with $u_n-v^1_n$ in place of $u_n$. At each step, the $L^2$ norm decreases by
at least $(C')^{1/2}\delta^3$. After $N:=N(\delta)$ steps, we obtain $(v^j_n)_{1\leq j\leq N}$ and
$(\gamma^j_n)_{1\leq j\leq N}$ so that
$$u_n=\sum_{j=1}^Nv^j_n+q^N_n,$$ $$\|u_n\|^2_{L^2}=\sum_{j=1}^N\|v^j_n\|^2_{L^2}+\|q_n^N\|^2_{L^2}.$$
The latter equality is due to the disjoint of support on the Fourier side. We also have the error term estimate
~\eqref{E:preremainder}
$$\|D_\mu^{1/3}S_\mu(t)q_n^N\|_{L^6_{t,x}}\leq\delta.$$
Next, we will re-organize the decomposition to get~\eqref{E:preconstraintone}. We say that $\gamma^j_n=(\rho^j_n,\xi^j_n)$
and $\gamma^k_n=(\rho_n^k,\xi_n^k)$ are orthogonal if
$$
\lim_{n\to\infty}(\frac{\rho^j_n}{\rho^k_n}+\frac{\rho^k_n}{\rho^j_n}+\frac{|\xi^j_n-\xi^k_n|}{\rho^j_n})=\infty.
$$
We define $f^1_n$ to be the summation of those $v^j_n$ whose $\gamma^j_n$'s are not orthogonal to $\gamma^1_n$.
Then take the least $j_0\in[2,N]$ such that $\gamma^{j_0}_n$ is orthogonal to $\gamma^1_n$; then we define
$f^2_n$ to be the summation of the those $v^i_n$ whose $\gamma^j_n$'s are orthogonal to $\gamma^1_n$ but not
to $\gamma^{j_0}_n$.  Repeating this argument a finite number times, we obtain~\eqref{E:presum}.
The decomposition gives~\eqref{E:preconstraintone} automatically. Also the supports on the Fourier side
are disjoint, and we have~\eqref{E:preorthogonal}. Now we want to check that, up to a subsequence,~\eqref{E:preprofilesf} holds.

By construction, those $v^j_n$'s collected in $f^1_n$ have $\gamma^j_n$'s not orthogonal to $\gamma^1_n$, i.e. for those $j$, we have
\begin{equation}\label{E:finiteconstrain}
\lim_{n\rightarrow\infty }\frac{\rho^j_n}{\rho^1_n}+\frac{\rho^1_n}{\rho^j_n}<\infty\;,\;
\lim_{n\rightarrow\infty}\frac{|\xi^j_n-\xi^1_n|}{\rho^j_n}<\infty
\end{equation}
To show~\eqref{E:preprofilesf}, it is sufficient to show that, up to a subsequence,
$G^1_n(\widehat{v}^j_n)$ is bounded by a
compactly supported and bounded function. This implies~\eqref{E:preprofilesf} with $j=1$ and other $j$'s will be handled similarly by passing to subsequences successively. By construction, $|G^j_n(\widehat{v}^j_n)|\leq C_\delta 1_{[-1,1]}.$
Also, we observe that
$$G^1_n(\widehat{v}^j_n)=G^1_n(G^j_n)^{-1}G^j_n(\widehat{v}^j_n)$$
$$G^1_n(G^j_n)^{-1}f(\xi)=\sqrt{\frac{\rho^1_n}{\rho^j_n}}f(\frac{\rho^1_n}{\rho^j_n}\xi
+\frac{\xi^1_n-\xi^j_n}{\rho^j_n})$$
which yields the desired estimates for $G^1_n(\widehat{v^j_n})$ by~\eqref{E:finiteconstrain}.
\end{proof}

Next we perform a further decomposition to each $f_n^j$ to extract the space and time parameters of the profiles. The procedure is to take weak limits of normalized $f_n^j$ in $n$ successively; the reminder term is easily seen to converge to zero in the weak sense, which will be made clear from below. Roughly speaking, since it concentrates nowhere after taking possible (maximum times) weak limits, we can show that it converges to zero in the Strichartz norm.
\begin{lemma}\label{L:spacetime}
Suppose an $L^2$-bounded sequence $(f_n)_{n\ge 1}$ satisfies
$$\sqrt{\rho_n}|\widehat{f}_n(\rho_n(\xi+(\rho_n)^{-1}\xi_n))|\leq \widehat{F}(\xi)$$ with
$\widehat{F}\in L^{\infty}(K)$ for some compact set $K$ in $\mathbb{R}$ independent of $n$.
Then up to a subsequence, there exists a family $(y^{\alpha}_n,s^{\alpha}_n)\in\mathbb{R}\times\mathbb{R}$ and a sequence $(\phi^{\alpha})_{\alpha\geq 1}$ of $L^2$ functions such that, if  $\alpha\neq\beta$, as $n\to \infty$,
\begin{equation}\label{E:spacetimeconstrain}
|s^{\alpha}_n-s^{\beta}_n|+|\frac{(6\xi_n^2+\mu)(s^{\alpha}_n-s^{\beta}_n)}{\rho_n^2}|+\left|y_n^{\alpha}-y_n^{\beta}-
\frac{(4\xi_n^2+2\mu)\xi_n(s^{\alpha}_n-s^{\beta}_n)}{\rho_n^3}\right| \to \infty.
\end{equation}
For every $M\geq 1$, there exists $e^{M}_n\in L^2$,
\begin{equation}\label{E:spacetimesum}
f_n(x)=\sum_{\alpha=1}^M\sqrt{\rho_n}\left(S_{\rho_n^{-2}\mu}(s_n^{\alpha})[e^{i(\cdot)\rho^{-1}_n\xi_n}
\phi^{\alpha}(\cdot)]\right)(\rho_nx-y^{\alpha}_n)+e^M_n(x)
\end{equation}
and
\begin{equation}\label{E:spacetimeremainder}
 \lim_{M\to\infty}\lim_{n\to \infty}\|D_{\mu}^{1/3}S_{\mu}(t)e^M_n\|_{L^6_{t,x}}=0.
\end{equation}
Furthermore, for any $M\geq 1$,
\begin{equation}\label{E:spacetimeorthogon}
\lim_{n\rightarrow\infty}\Big(\|f_n\|^2_{L^2}
-(\sum_{\alpha=1}^M\|\phi^{\alpha}\|^2_{L^2}+\|e^M_n\|^2_{L^2})\Big)=0
\end{equation}
\end{lemma}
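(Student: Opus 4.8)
The plan is to extract the spatial and temporal concentration parameters from the sequence $(f_n)$ by iterated weak limits, exactly as in the standard profile decomposition but carefully renormalized to account for the lack of Galilean invariance of $S_\mu$. First I would introduce the rescaled functions $\tilde f_n(x) := \rho_n^{-1/2} e^{-i(\cdot)\rho_n^{-1}\xi_n} f_n(\rho_n^{-1}\cdot) $ so that $\widehat{\tilde f}_n$ is supported (up to the envelope $\widehat F$) in the fixed compact set $K$; this reduces matters to a sequence with uniformly frequency-localized, $L^2$-bounded Fourier transforms. Crucially, $\widehat F\in L^\infty(K)$ with $K$ compact forces $(\tilde f_n)$ to be bounded in every $L^p$, $2\le p\le\infty$, and in particular in $\dot H^s$ for every $s$; so by the Rellich--Kondrachov type compactness one expects that, after passing to a subsequence, for any sequence of translations the weak limit exists. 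The parameters $(y_n^\alpha, s_n^\alpha)$ will play the role of the translation/evolution parameters adapted to the operator $S_{\rho_n^{-2}\mu}(s)$ acting on the modulated, rescaled data: note that conjugating $S_\mu$ by the rescaling $x\mapsto\rho_n x$ and the modulation $e^{i(\cdot)\rho_n^{-1}\xi_n}$ produces $S_{\rho_n^{-2}\mu}(s)$ together with the precise affine shift in $y$ appearing in \eqref{E:spacetimeconstrain}, which is just the group-velocity translation $4\xi^3+2\mu\xi$ of the phase $\phi_\mu$ evaluated at the modulation frequency.

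Next I would run the standard inductive extraction. Suppose $f_n = \sum_{\alpha=1}^{M-1} (\text{profile}_\alpha) + e_n^{M-1}$ has been constructed. If $\lim_n \|D_\mu^{1/3} S_\mu(t) e_n^{M-1}\|_{L^6_{t,x}} = 0$ we stop. Otherwise, using Lemma \ref{L:loaclrestrict} with some $q\in(4,6)$: since the frequency support stays inside a fixed compact set (this is preserved by the construction because all operations are Fourier-support-preserving up to the fixed envelope), the $L^6_{t,x}$ lower bound together with interpolation between the $L^q_{t,x}$ bound of Lemma \ref{L:loaclrestrict} and $L^\infty_{t,x}$ forces a lower bound on $\|D_\mu^{1/3} S_\mu(t) e_n^{M-1}\|_{L^\infty_{t,x}}$, hence there are $(t_n, x_n)$ with $|D_\mu^{1/3} S_\mu(t_n) e_n^{M-1}(x_n)| \gtrsim \delta'$ for some $\delta' = \delta'(\text{residual norm}) > 0$. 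Translating in space and time and undoing the $\rho_n$-rescaling/modulation produces the new parameters $(y_n^M, s_n^M)$; pairing the pointwise lower bound against the (compactly-supported-Fourier, hence continuous) functions and using weak-$L^2$ convergence of the translated/evolved sequence yields a nonzero weak limit $\phi^M$ with $\|\phi^M\|_{L^2}^2 \gtrsim (\delta')^{2}$ bounded below. Subtracting this profile and iterating, the Pythagorean relation \eqref{E:spacetimeorthogon} (valid at each finite stage by the usual weak-limit orthogonality, $\|f_n\|^2 = \|\text{profile}_M\|^2 + \|e_n^M\|^2 + o(1)$, which uses $\langle \text{profile}_j, \text{profile}_k\rangle \to 0$ for $j\ne k$ by the parameter divergence \eqref{E:spacetimeconstrain}) shows the residual $L^2$ norm drops by a fixed amount each step relative to the surviving Strichartz norm; the geometric decay forces \eqref{E:spacetimeremainder}.

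The main obstacle — and the point requiring genuine care beyond the Schrödinger/Airy template — is the orthogonality bookkeeping \eqref{E:spacetimeconstrain} and its interaction with the $n$-dependent operator $S_{\rho_n^{-2}\mu}(s)$. One must check three things simultaneously: (i) that the parameters $(y_n^\alpha, s_n^\alpha)$ produced at distinct stages do diverge in the specific three-term combination of \eqref{E:spacetimeconstrain} — this is the standard ``if they didn't diverge, the new profile would already have been captured by the previous one'' argument, but the metric in which ``closeness'' is measured is now the quasi-metric dictated by the phase $\phi_{\rho_n^{-2}\mu}$, so I would verify that $S_{\rho_n^{-2}\mu}(s_n^\alpha - s_n^\beta)$ applied to the $\beta$-profile, after the $y$-translation, converges weakly to $0$ precisely when \eqref{E:spacetimeconstrain} holds; (ii) that when the $\mu$-term degenerates ($\rho_n^{-2}\mu \to 0$, i.e. $\rho_n \to \infty$) or blows up ($\rho_n \to 0$) the limiting operator is still well-behaved enough — here one passes to a further subsequence so that $\rho_n^{-2}\mu \to \mu_\infty \in [0,\infty]$ and treats the $\mu_\infty = \infty$ case (where $S_{\rho_n^{-2}\mu}$ degenerates to a second-order Schrödinger-type evolution, cf. Remark \ref{re:lack-Galilean}) separately; and (iii) that the extracted $\phi^\alpha$ genuinely lie in $L^2$ with the envelope-induced regularity so that all the pairings make sense. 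Everything else — the Pythagorean expansion, the geometric decay of the residual, and the passage from vanishing $L^6_{t,x}$ of the limit to \eqref{E:spacetimeremainder} via the Brezis--Lieb / refined Strichartz machinery — is routine once (i)–(iii) are in place.
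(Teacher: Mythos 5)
Your proposal follows essentially the same route as the paper: rescale and modulate to reduce to a sequence with Fourier support in a fixed compact set, extract profiles by iterated weak limits after conjugating by $S_{\rho_n^{-2}\mu}(\cdot)$ and translation, and control the residual by interpolating the $L^6_{t,x}$ Strichartz norm between the uniform $L^q_{t,x}$ bound from Lemma \ref{L:loaclrestrict} ($4<q<6$) and the $L^\infty_{t,x}$ norm, the latter being bounded by the supremum of weak-limit $L^2$ masses via the convolution-against-$\mathcal{F}^{-1}\chi$ trick. One small remark: your concern (ii) about passing to a subsequence with $\rho_n^{-2}\mu\to\mu_\infty\in[0,\infty]$ and treating $\mu_\infty=\infty$ separately is not actually needed here — the decomposition keeps the $n$-dependent propagator $S_{\rho_n^{-2}\mu}(s_n^\alpha)$ in \eqref{E:spacetimesum}, the weak limits exist simply by Banach--Alaoglu (unitarity of $S_{\rho_n^{-2}\mu}$), and the constant in Lemma \ref{L:loaclrestrict} is uniform in $\mu\ge 0$, so no limiting operator ever has to be identified; likewise the invocation of Rellich--Kondrachov is superfluous, since compact Fourier support is used only for the $L^\infty$ estimate, not for existence of weak limits.
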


\begin{proof} We will be sketchy on our proof, see \cite{Carles-Keraani:2007:profile-schrod-1d} or \cite{Shao:2008:linear-profile-Airy-Maximizer-Airy-Strichartz} for similar arguments in other contexts. Let $P:=(P_n)_{n\geq 1}$  with $$\widehat{P}_n(\xi)=\sqrt{\rho_n}\widehat{f}_n(\rho_n(\xi+(\rho_n)^{-1}\xi_n)).$$
Let $\mathcal{W}(P)$ be the set of weak limits of subsequences of $P$ in $L^2$ defined by
$$\mathcal{W}(P)=\{\omega-\lim_{n\rightarrow\infty}e^{-ix\rho^{-1}_n\xi_n}S_{\rho_n^{-2}\mu}(-s_n)
[e^{i(\cdot)\rho^{-1}_n\xi_n}P_n(\cdot)](x+y_n)\;{\rm in}\;L^2\;:(y_n,s_n)\in\mathbb{R}^2 \}$$
and
$$\mu(P):=\sup\{\|\phi\|_{L^2}:\phi\in\mathcal{W}(P)\}.$$
Then taking weak limits and imposing the orthogonality condition on the parameters  ~\eqref{E:spacetimeconstrain} repeatedly, we have the following decomposition
$$
P_n(x)=\sum_{\alpha=1}^Me^{-ix\rho^{-1}_n\xi_n}S_{\rho_n^{-2}\mu}(s_n^{\alpha})
[e^{i(\cdot)\rho^{-1}_n\xi_n}\phi^{\alpha}(\cdot)](x-y^{\alpha}_n)+P^M_n(x).
$$
We may assume that $\widehat{\phi^\alpha}, \widehat{P_n^M}$ are in $ L^\infty$ and of compact support.
Let $P^M:=(P^M_n)_{n\geq 1}$, then the weak convergence holds,
\begin{equation}\label{E:limitp}
\lim_{M\rightarrow\infty}\mu (P^M)=0.
\end{equation}
For any $M\geq 1$, we also have
$$\lim_{n\rightarrow\infty}\Bigl(\|f_n\|^2_{L^2}-\bigl(\sum_{\alpha=1}^M\|\phi^{\alpha}\|^2_{L^2}+\|P^M_n\|^2_{L^2}\bigr)
\Bigr)=0.$$
Recall that $f_n(x)=\sqrt{\rho_n}e^{ix\xi_n}P_n(\rho_n x)$, the decomposition ~\eqref{E:spacetimesum} follows after setting $e^M_n(x):=\sqrt{\rho_n}e^{ix\xi_n}P^M_n(\rho_nx).$ It remains to obtain the strong convergence of the error in the Strichartz norm
\begin{equation}\label{E:spacetimeremainder2}
\lim_{M\rightarrow\infty}\lim_{n\rightarrow\infty}\|D_\mu^{1/3}S_\mu(t)[\sqrt\rho_n
e^{iy\xi_n}P^M_n(\rho_ny)]\|_{L^6_{t,x}}=0.
\end{equation}Indeed, by scaling, the norm above is equal to
\begin{equation*}
\Vert D_{\rho_n^{-2}\mu}^\frac{1}{3}S_{\rho_n^{-2}\mu}(t)\left[e^{iy\rho_n^{-1}\xi_n}P_n^M\right]\Vert_{L^6_{t,x}}
\end{equation*}By interpolation,
\begin{equation*}
\begin{split}
\Vert D_{\rho_n^{-2}\mu}^\frac{1}{3}S_{\rho_n^{-2}\mu}(t)\left[e^{iy\rho_n^{-1}\xi_n}P_n^M\right]\Vert_{L^6_{t,x}}
\le C\Vert D_{\rho_n^{-2}\mu}^\frac{2}{q}S_{\rho_n^{-2}\mu}(t)&\left[e^{iy\rho_n^{-1}\xi_n}P_n^M\right]\Vert^{q/6}_{L^{q}_{t,x}}
\\
&\times\Vert S_{\rho_n^{-2}\mu}(t)\left[e^{iy\rho_n^{-1}\xi_n}P_n^M\right]\Vert^{1-q/6}_{L^\infty_{t,x}}
\end{split}
\end{equation*} for $4<q<6$. Let $\omega_n(t):=S_{\rho_n^{-2}\mu}(t)\left[e^{iy\rho_n^{-1}\xi_n}P_n^M\right]$. Then by Lemma \ref{L:loaclrestrict}, we see that
\begin{equation*}\label{Estimt-1}
\Vert D_{\rho_n^{-2}\mu}^\frac{2}{q}\omega_n\Vert_{L^q_{t,x}}\lesssim 1
\end{equation*}
for some $q<6$, which is uniform in $n$. Therefore to prove \eqref{E:spacetimeremainder2}, we reduce to prove that
\begin{equation}\label{Estimt2}
\lim_{M\to +\infty}\limsup_{n\to+\infty}\Vert \omega_n\Vert_{L^\infty_{t,x}}=0.
\end{equation}
Now we are going to deduce \eqref{Estimt2} from the claim
\begin{equation}\label{Estimt3}
\limsup_{n\to +\infty}\Vert \omega_n\Vert_{L^\infty_{t,x}}\lesssim_K\mu(P^M).
\end{equation}
Indeed, assume $\widehat{P}_n^M$ is supported by $K$ and set $\chi\in C^\infty_c(\mathbb{R})$ be even and such that $\chi=1$ on $K$, and $(t_n,y_n)$ be such that
$$\Vert \omega_n\Vert_{L^\infty_{t,x}}=\vert \omega_n(t_n,y_n)\vert.$$
Then $\omega_n$ is supported by $K+\rho_n^{-1}\xi_n$. So if $$\chi_n(x):=\chi(x-\rho_n^{-1}\xi_n),$$
    then it follows that $$\omega_n=\mathcal{F}^{-1}\bigl(\chi_n \mathcal{F}\omega_n\bigr),$$
    where $\mathcal{F}$ denotes the spatial Fourier transform. Then
    \begin{align*}
  \|\omega_n\|_{L^\infty}&=\left|\omega_n(t_n,y_n)\right|= \left| \mathcal{F}^{-1}\bigl(\chi_n \mathcal{F}\omega_n\bigr) (t_n,y_n)\right|\\
    &=\lim_{n\to\infty} \left|\frac {1}{\sqrt{2\pi}} \int \mathcal{F}^{-1} (\chi_n)(x)\omega_n(t_n,x-y_n)dx\right|\\
    &=\lim_{n\to\infty} \left|\frac {1}{\sqrt{2\pi}} \int \mathcal{F}^{-1} (\chi_n)e^{ix\rho_n^{-1}\xi_n}e^{-ix\rho_n^{-1}\xi_n}\omega_n(t_n,x-y_n)dx\right|\\
    &=\lim_{n\to\infty} \left|\frac {1}{\sqrt{2\pi}} \int \mathcal{F}^{-1} \bigl(\chi_n(\cdot-\rho_n^{-1}\xi_n)\bigr)e^{-ix\rho_n^{-1}\xi_n}\omega_n(t_n,x-y_n)dx\right|\\
    &=\lim_{n\to\infty}  \left|\frac {1}{\sqrt{2\pi}}\int \mathcal{F}^{-1} (\chi)e^{-ix\rho_n^{-1}\xi_n}\omega_n(t_n,x-y_n)dx\right|.
  \end{align*}
  We observe that the second integrand above is in form of  defining elements in $\mathcal{W}(P^M)$. Thus in the limit, by Cauchy-Schwarz, we see that it is bounded by
  $$ \|\mathcal{F}^{-1} (\chi)\|_{L^2} \mu(P^M),$$
which is the desired bound. Therefore it ends the proof.
\end{proof}

\begin{remark}\label{re:reduction}
In Lemma~\ref{L:spacetime}, we will make a useful reduction when $\lim_{n\to\infty}\rho^{-1}_n\xi_n=a$ is finite:
we will let $\xi_n\equiv 0$. This is possible since we can replace $e^{ix\rho^{-1}_n\xi_n}\phi^{\alpha}$ with $e^{ix\alpha}\phi^{\alpha}$ by putting the difference into error term, then we can regard $e^{ix\alpha}\phi^{\alpha}$ as a new $\phi^{\alpha}$.
\end{remark}

\begin{proof}[Proof of Theorem \ref{thm:linear-profile}.]Having Lemmas ~\ref{L:scalefre} and \ref{L:spacetime}, we are ready to prove Theorem~\ref{thm:linear-profile}. Let
$$
(h^j_n,\xi^j_n,x^{j,\alpha}_n,t^{j,\alpha}_n):=((\rho^j_n)^{-1},\xi^j_n,(\rho^j_n)^{-1}y^{j,\alpha}_n
,(\rho^{j}_n)^{-4}s^{j,\alpha}_n).
$$
Then we put all the error terms together,
\begin{equation}\label{E:finaldecomp}
u_n=\sum_{{1\leq j\leq N,\xi_n^j\equiv 0}\atop{\text{ or }
|h^j_n\xi^j_n|\rightarrow\infty}}\sum_{\alpha=1}^{M_j}
S_{\mu}(t^{j,\alpha}_n)g^{j,\alpha}_n[e^{i(\cdot)h^j_n\xi^j_n}\phi^{j,\alpha}]+\omega^{N,M_1,\cdots,M_N}_{n}
\end{equation}
where $g^{j,\alpha}_n(\phi)(x):=\frac{1}{(h^j_n)^{1/2}}\phi(\frac{x-x^{j,\alpha}_n}{h^j_n})$  and
$\omega^{N,M_1,\cdots,M_N}_{n}=\sum_{j=1}^Ne^{j,M_j}_n+q^{N}_n.$ We enumerate the pair $(j,\alpha)$ by $\omega$ satisfying
\begin{equation}\label{eq:re-label}
\omega(j,\alpha)<\omega(k,\beta)\;{\rm if}\; j+\alpha<k+\beta,\text{ or }j+\alpha=k+\beta\;{\rm and}\;j<k.
\end{equation}
After re-labeling, \eqref{E:finaldecomp} can be rewritten as
$$u_n=\sum_{{1\leq j\leq l,\xi_n^j\equiv 0}\atop{\text{ or }|h^j_n\xi^j_n|\rightarrow\infty}}S_{\mu}(t^{j}_n)g^{j}_n
[e^{i(\cdot)h^j_n\xi^j_n}\phi^{j}]+\omega^{l}_n,$$ where $\omega^{l}_n:=\omega^{N,M_1,\cdots,M_N}_{n}$ with
$l=\sum_{j=1}^N M_j.$

Now we begin to verify this decomposition satisfies those two properties in Theorem \ref{thm:linear-profile}. Firstly we can see that the family $(h^j_n,\xi^j_n,x^j_n,t^j_n)_{n\ge 1}$ is pairwise orthogonal in the sense of \eqref{eq:ortho} in Theorem \ref{thm:linear-profile}. Secondly, the remainder term $D^{1/3}_\mu S_\mu(t)\omega^{N,M_1\cdots,M_N}_n$ converges to zero in the
Strichartz norm $\|\cdot\|_{L^6_{t,x}}$. That is we have to prove that, in view of the enumeration defined in \eqref{eq:re-label},
\begin{equation}\label{E:lastremainder}
\lim_{n\rightarrow\infty}\|D^{1/3}_\mu S_\mu(t)\omega^{N,M_1\cdots,M_N}_n\|_{L^6_{t,x}}\rightarrow 0
,\;{\rm as}\;\inf_{1\leq j\leq N}\{N,j+M_j\}\rightarrow\infty.
\end{equation} This is a crucial step, which is done by using the following Lemma \ref{le:orthog-strichartz-space} on orthogonality of profiles in the Strichartz space. One can also consult similar proofs in \cite{Keraani:2001:profile-schrod-H^1, Shao:2008:linear-profile-Airy-Maximizer-Airy-Strichartz}.
\end{proof}

\begin{lemma}\label{le:orthog-strichartz-space}
Let $(h_n^j,\xi_n^j,x_n^j,t_n^j)_{n\ge 1}$ be a family of orthogonal sequences. Let
$$Q_n^j(t,x):=D^{1/3}_{\mu} S_{\mu}(t+t^j_n)g^j_n[e^{i(\cdot)h^j_n\xi^j_n}\phi^j(\cdot)](x)$$
Then for every $l\ge 1$,
\begin{equation}
\lim_{n\rightarrow\infty}\Big( \|\sum_{j=1}^lQ_n^j\|^6_{L^6_{t,x}}-\sum_{j=1}^l\|Q_n^j\|^6_{L^6_{t,x}}\Big)=0
\end{equation} with $\xi^j_n\equiv 0$ when $\lim_{n\rightarrow\infty}|h^j_n\xi^j_n|<\infty$.
\end{lemma}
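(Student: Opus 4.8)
The plan is to reduce the claim on the $L^6$ norm to a pairwise statement and then exploit the orthogonality of the parameters $(h_n^j,\xi_n^j,x_n^j,t_n^j)$ to show that distinct profiles have asymptotically disjoint supports in space-time (after an appropriate change of variables). Expanding $\|\sum_{j=1}^l Q_n^j\|_{L^6_{t,x}}^6$ multinomially, the diagonal terms give $\sum_j \|Q_n^j\|_{L^6_{t,x}}^6$ and all cross terms are controlled by a finite sum of quantities of the form $\int |Q_n^j|^a |Q_n^k|^b$ with $a+b=6$, $a,b\ge 1$ and $j\neq k$. So it suffices to prove that for $j\neq k$,
\begin{equation*}
\lim_{n\to\infty}\int_{\mathbb{R}\times\mathbb{R}}|Q_n^j(t,x)|^a|Q_n^k(t,x)|^b\,dt\,dx=0.
\end{equation*}
By a density argument I first reduce to the case where the $\phi^j$ (or rather their relevant transforms, including the modulation $e^{i(\cdot)h_n^j\xi_n^j}\phi^j$ when $|h_n^j\xi_n^j|\to\infty$) are nice, say Schwartz with compactly supported Fourier transform; the uniform Strichartz bound $\|Q_n^j\|_{L^6_{t,x}}\lesssim \|\phi^j\|_{L^2}$ from \eqref{eq:symmetric-strichartz} lets one pass back to general $L^2$ profiles.

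The next step is to understand $Q_n^j$ after rescaling. Using the scaling relation for $S_\mu$ (the interplay between $D_\mu$, $S_\mu$ and the dilation by $h_n^j$ used already in the proof of Lemma \ref{L:spacetime}), a change of variables $t\mapsto (h_n^j)^4 t + t_n^j$, $x\mapsto h_n^j x + x_n^j$ turns $|Q_n^j|$ into a fixed profile function evaluated at translated/rescaled coordinates, modulo the modulation $e^{i(\cdot)h_n^j\xi_n^j}$. When $\xi_n^j\equiv 0$ this fixed profile is essentially $|D^{1/3}S(t)\phi^j|$ (in the $\mu=0$ normalization) or its $\mu$-analogue; when $|h_n^j\xi_n^j|\to\infty$ the relevant limiting object is, by Remark \ref{re:lack-Galilean} and Proposition \ref{prop:convg-to-schr}, a second-order Schr\"odinger solution $|e^{-it\Delta}\phi^j|$, which decays in $L^6_{t,x}$ as one moves off a bounded set in space-time. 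Either way, $|Q_n^j|^6$ behaves like an approximate identity concentrated near the "center" of the $j$-th bubble in the rescaled frame.

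Then the heart of the matter is to exploit \eqref{eq:ortho}. In the first alternative — $\limsup_n(h_n^j/h_n^k + h_n^k/h_n^j + h_n^j|\xi_n^j-\xi_n^k|)=\infty$ — the two bubbles live at incomparable scales or at frequencies so far apart that, after the change of variables adapted to bubble $j$, bubble $k$ either shrinks to a point, spreads to infinity, or (in the frequency case) its space-time support runs off to infinity because of the group velocity $2(2\xi^2+\mu)\xi$ term; in each case $\int |Q_n^j|^a|Q_n^k|^b\to 0$ by dominated convergence against the fixed profile for $j$. In the second alternative — $(h_n^j,\xi_n^j)=(h_n^k,\xi_n^k)$ but the time shift, the rescaled time shift, or the translated/group-velocity-corrected space shift diverges — after passing to the common rescaled frame the two profiles are fixed $L^6_{t,x}$ functions translated apart by a diverging vector in $(t,x)$, so their overlap integral again vanishes. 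The main obstacle, and the genuinely new difficulty compared to the Schr\"odinger and Airy cases, is the lack of scaling invariance when $\mu>0$: one cannot normalize $h_n^j\equiv 1$, so the "limiting profile" for bubble $j$ itself depends on $n$ through $\rho_n^{-2}\mu$, and one must carefully track the three-term orthogonality condition (especially the $x$-translation term with its coefficient $2(2(\xi_n^j)^2+\mu)\xi_n^j$) through the change of variables. I would handle this by doing the analysis for the operators $S_{\rho_n^{-2}\mu}$ with the $n$-dependent coefficient carried along, using that the relevant bounds in Lemma \ref{L:loaclrestrict} and \eqref{eq:symmetric-strichartz} are uniform in $\mu\ge 0$, and only passing to the limit at the very end.
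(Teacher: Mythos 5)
Your overall strategy---reduce to pairwise estimates $\lim_n\int|Q_n^j|^a|Q_n^k|^b=0$, approximate the profiles by nice functions, rescale to the frame of one bubble, and use "translation/scale separation implies vanishing overlap"---is the standard soft template, and it differs from the paper's proof, which never passes to a limiting profile but instead runs a direct quantitative analysis: after the Hausdorff--Young reduction for Case 1, and via stationary/non-stationary phase bounds (uniform in $b_n=a_n^2+\mu_n$) together with an explicit partition of space-time into regions $A_s,B_s,C_s$ and time intervals $\tau_0,\tau_n$ for Case 2. The softer route is in principle viable, but as written there is a genuine gap precisely at the step the paper works hardest on.

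The gap is in Case 2 (same $(h_n^j,\xi_n^j)=(h_n^k,\xi_n^k)$, divergent time/space shift). You assert that ``after passing to the common rescaled frame the two profiles are fixed $L^6_{t,x}$ functions translated apart by a diverging vector,'' but they are not fixed: $I_n^j$ and $I_n^k$ depend on $n$ through both $\mu_n=\mu(h_n^j)^2$ and $a_n=h_n^j\xi_n^j\to\infty$. For an $n$-dependent pair with no more than uniform $L^6$ bounds, a divergent translation by $(s_n,y_n)$ does not force the overlap integral to vanish; one needs either strong $L^6$ convergence of each rescaled profile to a fixed limit (in a time/space rescaling that itself depends on $n$), or a uniform-in-$n$ pointwise/local decay estimate. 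The paper supplies the latter in the form of the dispersive bounds \eqref{bbd-1}--\eqref{bbd-2} and the careful case split $|s_n|b_n\to\infty$ versus $|s_n|b_n$ bounded, $|y_n|\to\infty$; these bounds are the $n$-uniform dominating function that your ``dominated convergence'' step would need and does not provide. The former route would require upgrading Proposition~\ref{prop:convg-to-schr} to an $L^6_{t,x}$ convergence of the functions (not just the norms), generalizing it to $\mu_n\neq 0$ and to subsequential limits $\mu_n\to 0,\,c,\,\infty$, and---crucially---identifying the correct $n$-dependent time rescaling (of order $b_n^{-1}$) under which the convergence and the orthogonality-induced translations live on the same footing. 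The three-term orthogonality in \eqref{eq:ortho}, with its separate $t_n^j-t_n^k$, $(t_n^j-t_n^k)(\mu+6(\xi_n^j)^2)$ and corrected-$x$ pieces, is exactly the bookkeeping that keeps these scales straight, and your proposal names the difficulty but does not resolve it; ``carrying along $S_{\rho_n^{-2}\mu}$ and passing to the limit at the end'' is a plan rather than an argument. To close the gap you would need to prove the requisite strong $L^6$ convergence along subsequences (handling $\mu_n\to\infty$, where the profile oscillates faster and does not converge as a function without a further $t\mapsto t/\mu_n$ rescaling), or fall back on quantitative dispersive estimates as the paper does.
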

We present the proof of this lemma in the following section.

\section{Proof of Lemma \ref{le:orthog-strichartz-space}}\label{sec:an-estimate}
By an application of H\"older's inequality, the claim in Lemma \ref{le:orthog-strichartz-space} reduces to the following lemma,
\begin{lemma}For $j\neq k$,
    \begin{equation}\label{eq-61}
    \lim_{n\to\infty}\| Q_n^jQ_n^k\|_{L^3_{t,x}}=0,
    \end{equation}
    where $$ Q_n^j(t,x):= D^{1/3}_{\mu}S_{\mu}(t+t_n^j)g_n^j[e^{i(\cdot)h_n^j\xi_n^j}](x).$$
    Likewise for $Q_n^k$ and the parameters satisfy
\begin{align}
&\label{eq-62} \text{either }\frac {h_n^j}{h_n^k}+\frac {h_n^k}{h_n^j} +h_n^j|\xi_n^j-\xi_n^k|\to\infty, \\
&\label{eq-63} \text{or }(h_n^j,\xi_n^j)=(h_n^k,\xi_n^k) \text{ and }\\
&\nonumber \frac {|t_n^j-t_n^k|}{(h_n^j)^4}+\frac {|(t_n^j-t_n^k)(\mu+6(\xi_n^j)^2)|}{(h_n^j)^2}+\frac {|x_n^j-x_n^k-(t_n^j-t_n^k)(4(\xi_n^j)^3+2\mu\xi_n^j)|}{h_n^j}\to\infty.
\end{align}
\end{lemma}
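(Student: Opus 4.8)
The plan is to pass to the frame of the $j$-th profile by translation and scaling, and then to run a case analysis on which of the orthogonality quantities in \eqref{eq-62}--\eqref{eq-63} diverges, reducing each case to a standard strong$\times$weak or transversality argument.

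\textbf{Reductions.} First I would use the Strichartz inequality \eqref{eq:symmetric-strichartz}, H\"older and the triangle inequality to reduce \eqref{eq-61} to the case where $\widehat{\phi^j}$ and $\widehat{\phi^k}$ are smooth and compactly supported; fix such approximants. Passing to a further subsequence, assume each of $h_n^j/h_n^k$, $h_n^j|\xi_n^j-\xi_n^k|$ and the three terms in \eqref{eq-63} converges in $[0,+\infty]$. Since $\|\cdot\|_{L^3_{t,x}}$ is unchanged by the physical space--time translation $(t,x)\mapsto(t-t_n^j,x-x_n^j)$ and by the scaling of ratio $h_n^j$ (which, by the computation in Lemma \ref{L:spacetime}, replaces $S_\mu$ by $S_{\mu_n}$ with $\mu_n=(h_n^j)^2\mu$), the claim reduces to $\|A_nB_n\|_{L^3_{t,x}}\to0$, where $A_n=D^{1/3}_{\mu_n}S_{\mu_n}(t)[e^{i(\cdot)b_n^j}\phi^j]$ and $B_n=D^{1/3}_{\mu_n}S_{\mu_n}(t+\tau_n)[e^{i(\cdot)b_n^k}\phi^k(\cdot-c_n)]$, with $b_n^j=h_n^j\xi_n^j$, $b_n^k=h_n^j\xi_n^k$, $\tau_n=(t_n^j-t_n^k)/(h_n^j)^4$ and $c_n=(x_n^j-x_n^k)/h_n^j$. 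The modulations $e^{i(\cdot)b_n^j}$, $e^{i(\cdot)b_n^k}$ cannot be removed, since there is no Galilean symmetry (Remark \ref{re:lack-Galilean}); this is the reason the regimes $\xi_n^j\equiv0$ and $|h_n^j\xi_n^j|\to\infty$ must be kept apart.

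\textbf{The case analysis.} If $h_n^j\sim h_n^k$ but $h_n^j|\xi_n^j-\xi_n^k|\to\infty$, then $|b_n^j-b_n^k|\to\infty$: the two wave packets $A_n,B_n$ are carried by the transversal group velocities $\phi'_{\mu_n}(b_n^j)$ and $\phi'_{\mu_n}(b_n^k)$ (recall $\phi'_\mu(\xi)=4\xi^3+2\mu\xi$, monotone on each half-line when $\mu\ge0$), so their space--time supports essentially separate; I would quantify this by interpolating $L^3_{t,x}=[L^{q/2}_{t,x},L^\infty_{t,x}]$ with $4<q<6$, controlling $\|A_nB_n\|_{L^{q/2}}$ through a bilinear extension-type estimate with a gain in $|b_n^j-b_n^k|$ and $\|A_nB_n\|_{L^\infty}$ by Lemma \ref{L:loaclrestrict} together with the interpolation argument of Lemma \ref{L:spacetime}, both uniformly in $\mu_n\ge0$. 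If instead $h_n^j/h_n^k\to0$ or $\infty$, then in the $j$-frame the $k$-profile takes the form $\sqrt{\lambda_n}\,\psi_n(\lambda_n\,\cdot)$ with $\lambda_n\to0$ or $\infty$, hence tends weakly to $0$ in $L^2$; then $B_n\rightharpoonup0$ in $L^6_{t,x}$, $A_n$ (a truncated profile) converges strongly in $L^6_{\mathrm{loc}}$ along a subsequence, and a cutoff in $(t,x)$ plus the strong$\times$weak argument gives $\|A_nB_n\|_{L^3}\to0$. Finally, in case \eqref{eq-63} one has $(h_n^j,\xi_n^j)=(h_n^k,\xi_n^k)$, so $A_n$ and $B_n$ are the same evolution displaced by $(\tau_n,c_n)$, and the divergence of the third term of \eqref{eq-63} says exactly that this displacement leaves every compact set even after subtracting the drift $\phi'_{\mu_n}(b_n^j)\tau_n$ along the group velocity (the first two terms recording separation in time and in the $\phi''_{\mu_n}$-scaled time); splitting $\mathbb{R}_t=\{|t|\le T\}\cup\{|t|>T\}$, applying H\"older on each piece, using the finiteness of the Strichartz norm and the dispersive decay of $S_{\mu_n}$, and letting $T\to\infty$ after $n\to\infty$, one concludes.

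\textbf{Main obstacle.} The real difficulty, common to all three cases, is the loss of scaling invariance for $\mu>0$: after rescaling to unit scale the effective coefficient $\mu_n=(h_n^j)^2\mu$ need not converge, so one cannot simply extract a limiting profile or a limiting flow. I would deal with this by only ever invoking estimates that are uniform in $\mu\ge0$ --- precisely the content of Lemmas \ref{le:refined-strichartz} and \ref{L:loaclrestrict} --- and, in the degenerate regimes $\mu_n\to0$ or $\infty$ (which occur in particular whenever $|h_n^j\xi_n^j|\to\infty$), by a further renormalization that sends $S_{\mu_n}$ to a limiting Schr\"odinger (or pure fourth-order) flow via Proposition \ref{prop:convg-to-schr}, after which the weak-convergence and dispersive arguments apply to fixed operators. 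The secondary obstacle, the absence of a Galilean transform, is what makes the modulations irremovable and forces the separate bookkeeping of the two frequency regimes throughout.
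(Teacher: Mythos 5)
Your overall landscape is right (reduce to nice approximants, pass to the $j$-frame, run a case analysis on the orthogonality parameters, and worry about the broken scaling when $\mu>0$), but the arguments you sketch in the two main cases would not go through without substantial extra ideas, and they are not the ones the paper uses.

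For the scale- and frequency-separated regimes (your first two bullets in the case analysis), the paper does not interpolate $L^3=[L^{q/2},L^\infty]$, nor does it run a strong$\times$weak argument. It squares, applies Hausdorff--Young in $(t,x)$ after the change of variables $(u,v)=(\xi-\eta,\phi_\mu(\xi)-\phi_\mu(\eta))$, and reduces to the explicit bilinear frequency integral
$(h_n^jh_n^k)^{3/4}\iint |\eta-\xi|^{-1/2}\,|\widehat{\phi^j}(h_n^j(\xi-\xi_n^j))\widehat{\phi^k}(h_n^k(\eta-\xi_n^k))|^{3/2}\,d\xi\,d\eta$,
which then goes to zero from support size alone. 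This is both elementary and \emph{manifestly uniform in $\mu$ and in the modulation}, which is exactly the point. Your strong$\times$weak argument, by contrast, requires $A_n$ to be (up to small tails) precompact in $L^6_{t,x}$; but $A_n=D^{1/3}_{\mu_n}S_{\mu_n}(t)[e^{i(\cdot)b_n^j}\phi^j]$ still carries the $n$-dependence through $\mu_n$ and the modulation $b_n^j$, and in the high-frequency regime $b_n^j\to\infty$ it does not converge in $L^6_{t,x}$. You acknowledge the issue and point to Proposition \ref{prop:convg-to-schr} to renormalize, but that proposition is stated for a \emph{fixed} $\phi$ and for $S=S_0$; turning it into the needed uniform/locally-uniform statement along the sequence, and propagating the resulting error through the product, is precisely where the real work would have to happen and is not addressed.

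For the case \eqref{eq-63}, splitting $\mathbb{R}_t=\{|t|\le T\}\cup\{|t|>T\}$ and using dispersive decay cannot succeed in general: the divergent quantity may be only the third, drift-corrected spatial term, with $s_n=(t_n^j-t_n^k)/(h_n^j)^4$ bounded. Then neither piece of your time split sees any separation. The paper instead works with the group-velocity drift $Y(s)=2sa_n(2a_n^2+\mu_n)$ and partitions the spatial line at each fixed time into time-dependent regions $A_s$, $B_s$, $C_s$ centered at $Y(s)$ and $Y(s)+y_n$, simultaneously splitting time into $\tau_0$, $\tau_n$ and the complement; only this bilinear space--time decomposition lets the stationary/non-stationary phase bounds $|I_n|\lesssim\min\{b_n^{1/6},\,|s|^{-1/2}b_n^{-1/3}\}$ and $|I_n|\lesssim b_n^{1/6}|y-Y(s)|^{-1}$ be summed to something small. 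The ``dispersive decay of $S_{\mu_n}$'' by itself is not the right input: the relevant decay rate depends on the frequency $a_n$ (through $b_n=a_n^2+\mu_n$), and it is $b_n$ that multiplies $s_n$ and enters the definition of $A_s,B_s$. This dependence is invisible in your sketch, yet it is what makes the estimate close.

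In short: your reductions are sound, your identification of the obstacles (no scaling when $\mu>0$, no Galilean invariance) is correct, but (i) the strong$\times$weak route for scale separation has a gap coming from the $n$-dependence of the ``fixed'' profile and is replaced in the paper by a direct bilinear Hausdorff--Young estimate, and (ii) the naive time split in case \eqref{eq-63} fails when only the spatial, drift-corrected term diverges; the proof needs a time-dependent spatial decomposition along the group velocity together with stationary-phase bounds depending on $b_n$, which the paper carries out but your plan does not provide.
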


\begin{proof}With no loss of generality, we may assume $\widehat{\phi^j}, \widehat{\phi^k} \in L^\infty (-1,1)$. We will prove \eqref{eq-61} case by case.

\textbf{Case 1.} Assume \eqref{eq-62}; we may first assume that $\frac {h_n^j}{h_n^k}\to 0.$ We rewrite $Q_n^j$ out,
\begin{equation}\label{eq-64}
Q_n^j=(h_n^j)^{1/2}\int e^{i(x+x_n^j)\cdot\xi+i(t+t_n^j)\bigl(\xi^4+\mu\xi^2\bigr)}\bigl(6\xi^2+\mu\bigr)^{1/6}
\widehat{\phi^j}\bigl(h_n^j(\xi-\xi_n^j)\bigr)d\xi.
\end{equation}
Likewise for $Q_n^k$. Following the Hausdorff-Young inequality, it reduces to show that the following:
\begin{align}\label{eq-65}
(h_n^jh_n^k)^{3/4}\iint \frac {|6\xi^2 +\mu|^{1/4}|6\eta^2 +\mu|^{1/4}}{|\eta-\xi|^{1/2} |2(\xi^2+\eta^2+\xi\eta)+\mu|^{1/2}} \left|\widehat{\phi^j}\bigl(h_n^j(\xi-\xi_n^j)\bigr)\widehat{\phi^k}\bigl(h_n^k(\xi-\xi_n^k)\bigr)
\right|^{3/2} d\xi d\eta \to 0,
\end{align}as $n\to\infty$. We may also assume that $\xi,\eta\ge 0$ in \eqref{eq-65}. Because $$\widehat{\phi^j}, \widehat{\phi^k}\in L^\infty(-1,1), \quad \frac {|6\xi^2 +\mu|^{1/4}|6\eta^2 +\mu|^{1/4}}{ |2(\xi^2+\eta^2+\xi\eta)+\mu|^{1/2}} \lesssim 1, $$ it is further reduced to showing that
\begin{equation}\label{eq-66}
(h_n^jh_n^k)^{3/4}\int_{\eta=\xi_n^k+O(\frac {1}{h_n^k})} \int_{\xi=\xi_n^j+O(\frac {1}{h_n^j})}\frac {1}{|\eta-\xi|^{1/2}}d\xi d\eta\to 0.
\end{equation} Since $\sqrt{a+h}-\sqrt{a-h}=\frac {2h}{\sqrt{a+h}+\sqrt{a-h}}\le 2\sqrt{h}$, we see that \eqref{eq-66} is bounded above by
$$ C(h_n^jh_n^k)^{3/4}  \times\frac {1}{h_n^k}\times (h_n^j)^{-1/2}\le C\bigl(\frac {h_n^j}{h_n^k}\bigr)^{1/4}\to 0.$$

Next we will assume that \begin{equation}\label{eq-67}
h_n^j=h_n^k, h_n^j|\xi_n^j-\xi_n^k|\to\infty.
 \end{equation}By the same reasoning as above, we aim to show that \eqref{eq-65} holds. Because of \eqref{eq-67}, either $h_n^j\xi_n^j, h_n^j\xi_n^k\to\infty$ or just one goes to infinity. In either case, the support information gives that
 $$ |\xi-\eta| \sim |\xi_n^j-\xi_n^k|.$$
Hence we see that \eqref{eq-65} is bounded by
$$ (h_n^j)^{3/2} (h_n^j)^{-2} |\xi_n^j-\xi_n^k|^{-1/2}=\left|h_n^j(\xi_n^j-\xi_n^k)\right|^{-1/2} \to 0. $$

\textbf{Case 2.} Assume the condition \eqref{eq-63}. We set
\begin{align*}
\mu_n &:=\mu(h_n^j)^2,\, a_n:=h_n^j\xi_n^j,\,b_n:=a_n^2+\mu_n,\\
y_n&:=\frac {x_n^j-x_n^k-(t_n^j-t_n^k)(4(\xi_n^j)^3+2\mu\xi_n^j)}{h_n^j}\\
&=\frac {x_n^j-x_n^k}{h_n^j}-\frac {t_n^j-t_n^k}{(h_n^j)^4}\times 2a_n(2a_n^2+\mu_n),\\
s_n&:=\frac{t_n^j-t_n^k}{(h_n^j)^4},\\
Y(s)&:= 2sa_n(2a_n^2+\mu_n).
\end{align*}

\textbf{Case 2a.} Assume that $a_n\to\infty$. By changing variables
$$y-\frac {x_n^k}{h_n^j}\to y,\,s+\frac {t_n^k}{(h_n^j)^4}\to s$$
followed by another change of variables, $s\to -s$, we see that it suffices to prove
\begin{equation}\label{eq-68}
\|I^j_n I^k_n\|_{L^3_{s,y}}\to 0, \text{ as } n\to\infty
\end{equation}
provided that
\begin{equation*}
|s_n|b_n\to \infty, \text{ or } |y_n|\to \infty.
\end{equation*}
 Here \begin{align*}
&I^j_n:= \int e^{i(y-\frac {x_n^j-x_n^k}{h_n^j})\xi+i(-s+\frac{t_n^j-t_n^k}{(h_n^j)^4})\bigl(\xi^4+
\mu_n\xi^2\bigr)}\bigl(6\xi^2+\mu_n\bigr)^{1/6} \widehat{\phi^j}(\xi-a_n)d\xi,\\
& I_n^k:=\int e^{iy\eta+i(-s)\bigl(\eta^4+
\mu_n\eta^2\bigr)}\bigl(6\eta^2+\mu_n\bigr)^{1/6} \widehat{\phi^k}(\eta-a_n)d\eta.
\end{align*}
From the stationary phase estimates \cite{Sogge:1993:fourier-analysis-book, Stein:1993}, there always holds that
\begin{equation}\label{bbd-1}
\begin{split}
|I_n^j|&\lesssim \min\{b_n^{1/6}, |s-s_n|^{-1/2}b_n^{-1/3}\},\\
|I_n^k|&\lesssim \min\{b_n^{1/6}, |s|^{-1/2}b_n^{-1/3}\}.
\end{split}
\end{equation}
This induces the following decomposition in the spatial space
\begin{equation}\label{eq-69}
\begin{split}
A_s&:= \{y: |y-Y(s)|\le |s|b_n\},\\
B_s&:= \{y: |y-Y(s)-y_n|\le |s-s_n|b_n\},\\
C_s&:=\mathbb{R} \setminus (A_s\cup B_s).
\end{split}
\end{equation}
We also split the time space into $$\mathbb{R}=\tau_0\cup \tau_n\cup (\tau_0\cup \tau_n)^c, \tau_0:=(-b_n^{-1}, b_n^{-1}), \tau_n:=(s_n-b_n^{-1},s_n+b_n^{-1}).$$

\textbf{Case 2aI.} We assume that $|s_n|b_n\to \infty$; an easy observation is that, for any $C>0$, $|s_n|\ge Cb_n^{-1}$ as long as $n$ is taken sufficiently large. We may also assume that $s_n\ge 0$. We first deal with the integral on $\mathbb{R}\times A_s$, for which we use the bound
\begin{equation}\label{eq-70}
|I_n^jI_n^k|\le C|s|^{-1/2}|s-s_n|^{-1/2}b_n^{-2/3}.
\end{equation}
Then since $|s_n|\gg b_n^{-1}$,
\begin{align*}
\int_{\tau_0}\int_{x\in A_s} |I_n^jI_n^k|^3 dsdx&\lesssim b_n^{-2}\int_{\tau_0}\int_{x\in A_s} |s|^{-3/2}|s-s_n|^{-3/2} dsdx\\
&\lesssim b_n^{-1}s_n^{-3/2} \int_{\tau_0} |s|^{-1/2}ds\\
&\lesssim (b_ns_n)^{-3/2} \to 0;
\end{align*}
and
\begin{align*}
\int_{\tau_n}\int_{x\in A_s} |I_n^jI_n^k|^3 dsdx&\lesssim b_n^{-2}\int_{\tau_n}\int_{x\in A_s} |s|^{-3/2}|s-s_n|^{-3/2} dsdx\\
&\lesssim b_n^{-1}\int_{\tau_n} |s|^{-1/2}|s-s_n|^{-3/2}ds\\
&\lesssim b_n^{-1}s_n^{-1/2} \int_{\tau_n}|s-s_n|^{-3/2}ds\\
&\le C(b_ns_n)^{-1/2} \to 0;
\end{align*}
and
\begin{align*}
\int_{(\tau_0\cup \tau_n)^c}\int_{x\in A_s} |I_n^jI_n^k|^3 dsdx&\lesssim b_n^{-2}\int_{(\tau_0\cup \tau_n)^c}\int_{x\in A_s} |s|^{-3/2}|s-s_n|^{-3/2} dsdx\\
&\lesssim b_n^{-1}\left(\int_{-\infty}^{-b_n^{-1}}+\int_{b_n^{-1}}^{s_n-b_n^{-1}}+\int_{s_n+b_n^{-1}}^\infty\right) |s|^{-1/2}|s-s_n|^{-3/2}ds\\
&=:I_1+I_2+I_3.
\end{align*}
Since
\begin{align*}
I_1&\lesssim b_n^{-1}b_n^{1/2}\int_{-\infty}^{-b_n^{-1}}|s-s_n|^{-3/2} ds\lesssim (b_ns_n)^{-1/2},\\
I_2&\lesssim b_n^{-1}b_n^{1/2}\int_{b_n^{-1}}^{s_n-b_n^{-1}}|s-s_n|^{-3/2} ds\lesssim (b_ns_n)^{-1/2},\\
I_3&\lesssim b_n^{-1}s_n^{-1/2}\int_{s_n+b_n^{-1}}^\infty |s-s_n|^{-3/2}ds\lesssim (b_ns_n)^{-1/2}.
\end{align*}
Hence
\begin{equation*}
\int_{(\tau_0\cup \tau_n)^c}\int_{x\in A_s} |I_n^jI_n^k|^3 dsdx \lesssim  (b_ns_n)^{-1/2} \to 0.
\end{equation*}
Since the bound \eqref{bbd-1} is symmetric with respect to $\tau_0$ and $\tau_n$, the estimate on $\mathbb{R}\times B_s$ follows similarly. So we reduce it to that on $\mathbb{R}\times (A_s \cup B_s)^c$, for which we use the following non-stationary bound for $I_n^j$ and $I_n^k$:
\begin{equation}\label{bbd-2}
|I_n^j|\le C\frac {b_n^{1/6}}{|y-Y(s)-y_n|},\,|I_n^k|\le C \frac {b_n^{1/6}}{|y-Y(s)|};
\end{equation}
We estimate $\int_{\tau_n}\int_{C_s} |I_n^jI_n^k|^3dyds$: by \eqref{bbd-1} and \eqref{bbd-2}, we have $$|I_n^jI_n^k|=\frac {b_n^{1/3}} {|y-Y(s)|}.$$
Then
\begin{equation*}
\begin{split}
\int_{\tau_n}\int_{C_s} |I_n^jI_n^k|^3dyds \le Cb_n \int_{\tau_n} \int_{|y-Y(s)|\ge b_ns} |y-Y(s)|^{-3} dyds&
\le Cb_n^{-1} \int_{(s_n-b_n^{-1}, s_n+b_n^{-1})} s^{-2} ds \\
&\le C (b_ns_n)^{-2}\to 0.
\end{split}
\end{equation*}Similarly we can estimate $\int_{\tau_0}\int_{C_s} |I_n^jI_n^k|^3dyds$. To estimate $\int_{(\tau_0\cup \tau_n)^c}\int_{C_s} |I_n^jI_n^k|^3dyds$, we use the following bound
$$ |I_n^jI_n^k|\le \frac {b_n^{-1/6}}{|y-Y(s)-y_n||s|^{1/2}}.$$
Then
\begin{equation*}
\int_{C_s} |I_n^jI_n^k|^3 dyds \lesssim Cb_n^{-1/2}|s|^{-3/2} \int_{|y-Y(s)-y_n|\ge b_n|s-s_n|} \frac {ds}{|y-Y(s)-y_n|^3}\le Cb_n^{-5/2} |s|^{-3/2} |s-s_n|^{-2}.
\end{equation*}Then for $n$ large enough such that $s_n\gg b_n^{-1}$, we split $$(\tau_0\cup \tau_n)^c =(-\infty,-b_n^{-1})\cup (b_n^{-1}, s_n/2)\cup (s_n/2, s_n-b_n^{-1})\cup (s_n+b_n^{-1},\infty).$$
Then on each interval we will show that the convergence holds:
\begin{equation*}
\begin{split}
C\int_{(-\infty,-b_n^{-1})} b_n^{-5/2} |s|^{-3/2} |s-s_n|^{-2} ds &\le Cb_n^{-5/2} s_n^{-2}\int_{(-\infty,-b_n^{-1})} |s|^{-3/2} ds \le C (b_ns_n)^{-2}.\\
C\int_{(b_n^{-1},s_n/2)} b_n^{-5/2} |s|^{-3/2} |s-s_n|^{-2} ds &\le Cb_n^{-5/2}s_n^{-2} \int_{(b_n^{-1},\infty)} s^{-3/2} ds \le C(b_ns_n)^{-2}.\\
C\int_{(s_n/2,s_n-b_n^{-1})} b_n^{-5/2} |s|^{-3/2} |s-s_n|^{-2} ds &\le Cb_n^{-5/2}s_n^{-3/2}\int_{(-s_n/2,-b_n^{-1})}|s|^{-2} ds \le C(b_ns_n)^{-3/2}.\\
C\int_{(s_n+b_n^{-1},\infty)} b_n^{-5/2} |s|^{-3/2} |s-s_n|^{-2} ds &\le Cb_n^{-5/2} s_n^{-3/2} \int_{(b_n^{-1},\infty)} |s|^{-2} ds \le C(b_ns_n)^{-3/2}.
\end{split}\end{equation*}
This finishes the proof on the region $\mathbb{R}\times (A_s\cup B_s)^c$ and therefore the proof for \textbf{Case 2aI}.

\textbf{Case 2aII.} We assume that $|s_n|b_n\le C_0$ for some fixed $C_0>0$ and $|y_n|\to\infty$. We first deal with the integration over $\mathbb{R}\times A_s$: fixing a large $K \gg C_0$, we split $\mathbb{R}:=\{s:\, b_n |s|\ge K\}\cup \{s:b_n |s|<K\}$. Then invoking the bound \eqref{bbd-1} that
$$ |I_n^jI_n^k| \lesssim b_n^{-2/3} |s|^{-1/2} |s-s_n|^{-1/2},$$
and $|s_n|\le \frac {C_0}{b_n}\ll \frac {K}{b_n}\le |s|$, which yields that $|s-s_n|\sim |s|$, and recalling that $|A_s|\le b_n|s|$, we have
\begin{align*}
\int_{\{s: \,b_n |s|\ge K\} }\int_{A_s} |I_n^jI_n^k|^3 dyds & \lesssim b_n^{-2} \int_{\{s:\, b_n |s|\ge K\} }\int_{A_s} |s|^{-3/2} |s-s_n|^{-3/2} dyds\\
&\lesssim b_n^{-1} \int_{\{s:\, b_n |s|\ge K\} } |s|^{-2} ds \lesssim K^{-1},
\end{align*} which is uniform in all large $n$ and  is going to zero as $K$ goes to infinity. On the other hand, on $\{s:\,b_n |s|<K\}\times A_s$, $|y-Y(s)|\le |s|b_n\le K\ll |y_n|$ for $n$ large enough, we then invoke the bound \eqref{bbd-2} for $I_n^j$ and \eqref{bbd-1} on $I_n^k$,
$$ |I_n^jI_n^k|\lesssim \frac {b_n^{-1/3}}{|s|^{1/2}} \frac {b_n^{1/6}}{|y-Y(s)-y_n|}\lesssim b_n^{-1/6} |y_n|^{-1}|s|^{-1/2}.$$
Then
\begin{align*}
\int_{\{s: \,b_n |s|< K\} }\int_{A_s} |I_n^jI_n^k|^3 dyds & \lesssim b_n^{-1/2}|y_n|^{-3} \int_{\{s:\, b_n |s|<K\} }\int_{A_s} |s|^{-3/2} dyds\\
&\lesssim b_n^{1/2} |y_n|^{-3} \int_{\{s:\, b_n |s|< K\} } |s|^{-1/2} ds \lesssim K^{1/2} |y_n|^{-3} ,
\end{align*} which is uniform in all large $n$ and  is going to zero as $K$ goes to infinity too. Similarly one can obtain similar results on $\mathbb{R}\times B_s$.

Now we come to the integration over $\mathbb{R}\times C_s$. We use the bound \eqref{bbd-2} for $I_n^j$ and \eqref{bbd-1} for $I_n^k$,
$$|I_n^jI_n^k|\lesssim b_n^{1/3} |y-Y(s)|^{-1},$$ then
\begin{align*}
\int_{\{s: \,b_n |s|\ge K\} }\int_{C_s} |I_n^jI_n^k|^3 dyds & \lesssim b_n \int_{\{s:\, b_n |s|\ge K\} }\int_{(A_s)^c} |y-Y(s)|^{-3} dyds\\
&\lesssim b_n \int_{\{s:\, b_n |s|\ge K\} } |b_ns|^{-2} ds \lesssim K^{-1},
\end{align*} which is uniform in all large $n$ and  is going to zero as $K$ goes to infinity. On the region $\{s:\, b_n|s|<K\}$, we use the bound \eqref{bbd-2} for $|I_n^jI_n^k|^{3/2}$ and \eqref{bbd-1} for $|I_n^jI_n^k|^{3/2}$,
$$|I_n^jI_n^k|\lesssim b_n^{1/3} |y-Y(s)|^{-1/2}|y-Y(s)-y_n|^{-1/2}.$$
To integrate over $C_s$ in space variable, fixing $s$ satisfying $|s|b_n <K$, we split $C_s:=(-\infty, Y(s)-|s|b_n) \cup (Y(s)+|s|b_n, Y(s)+y_n-|s|b_n)\cup (Y(s)+y_n+|s|b_n, \infty)$; those intervals are disjoint for large enough $n$ since $|s|b_n<K\ll y_n$ (note that we may assume that $y_n>0$). Then
\begin{align*}
&\int_{\{s: \,b_n |s|<K\} }\int_{-\infty}^{Y(s)-b_n|s|} |I_n^jI_n^k|^3 dyds \\
& \lesssim b_n \int_{\{s: \,b_n |s|<K\} } \int_{-\infty}^{Y(s)-b_n|s|} |y-Y(s)|^{-3/2}|y-Y(s)-y_n|^{-3/2} dyds \\
&\lesssim b_n \int_{\{s: \,b_n |s|<K\} } \int_{-\infty}^{Y(s)-b_n|s|}|y-Y(s)|^{-3/2} y_n^{-3/2}dyds \\
&\lesssim y_n^{-3/2} K^{1/2}\to 0, \text{ as } n\to \infty;
\end{align*}
and
\begin{align*}
&\int_{\{s: \,b_n |s|<K\} }\int_{Y(s)+y_n+|s|b_n}^\infty |I_n^jI_n^k|^3 dyds \\
& \lesssim b_n \int_{\{s: \,b_n |s|<K\} }\int_{Y(s)+y_n+|s|b_n}^\infty |y-Y(s)|^{-3/2}|y-Y(s)-y_n|^{-3/2} dyds \\
&\lesssim b_n \int_{\{s: \,b_n |s|<K\} }\int_{Y(s)+y_n+|s|b_n}^\infty |y-Y(s)-y_n|^{-3/2} y_n^{-3/2}dyds \\
&\lesssim y_n^{-3/2} K^{1/2}\to 0, \text{ as } n\to \infty.
\end{align*}
While for the integration over the middle interval, we split it into even smaller intervals,
$$(Y(s)+|s|b_n, Y(s)+y_n-|s|b_n)=(Y(s)+|s|b_n, Y(s)+y_n/2)\cup (Y(s)+y_n/2, Y(s)+y_n-|s|b_n) $$
then
\begin{align*}
&\int_{\{s: \,b_n |s|<K\} }\int_{Y(s)+b_n|s|}^{Y(s)+y_n-|s|b_n} |I_n^jI_n^k|^3 dyds\\
& \lesssim b_n \int_{\{s: \,b_n |s|<K\} }\bigl(\int_{Y(s)+b_n|s|}^{Y(s)+y_n/2}+\int_{Y(s)+y_n/2}^{Y(s)+y_n-b_n|s|} \bigr) |y-Y(s)|^{-3/2}|y-Y(s)-y_n|^{-3/2} dyds \\
&\lesssim b_n y_n^{-3/2}\int_{\{s: \,b_n |s|<K\} } \bigl(|b_ns|^{-1/2}-(y_n-b_n|s|)^{-1/2}\bigr)ds \\
&\lesssim b_ny_n^{-3/2} \int_{\{s: \,b_n |s|<K\} } |b_ns|^{-1/2}ds\\
&\lesssim K^{1/2}y_n^{-3/2}\to 0, \text{ as } n\to \infty.
\end{align*}
This finishes the proof for \textbf{Case 2aII}, thus \textbf{Case 2a}.

\textbf{Case 2b.} We are left with the case when $\xi_n^j=\xi_n^k\equiv 0$.  In this case, the orthogonality condition becomes
\begin{equation}\label{eq-71}
\frac {|t_n^j-t_n^k|}{(h_n^j)^4}\to\infty, \text{ or }\frac {|(t_n^j-t_n^k)\mu_n|}{(h_n^j)^4}\to \infty, \text{ or } \frac {|x_n^j-x_n^k|}{h_n^j}\to\infty.
\end{equation} This case can be similarly handled as in \textbf{Case 2a}; we omit the details.
\end{proof}

\section{A dichotomy on extremisers}\label{sec:dichotomy-extremisers}
We simplify the approach in \cite{Shao:2008:linear-profile-Airy-Maximizer-Airy-Strichartz} and present the following argument when $\mu=0$, also see \cite{Killip-Visan:2008:clay-lecture-notes}.
\begin{proof}
Choose an extremising sequence of functions $\{f_n\}_{n\ge 1}$ so that
$$\mathbf{S}=\lim_{n\to\infty}\|D^{1/3}S(t)f_n\|_{L^6_{t,x}},\quad \|f_n\|_{L^2}=1.$$
Applying Theorem \ref{thm:linear-profile} to $f_n$: for any $l\ge 1$, there exists $\{\phi^j\}_{1\le j\le l}, w_n^l\in L^2$ and $(h_n^j,\xi_n^j,x_n^j,t_n^j)$ such that
$$
u_n=\sum_{1\le j\le l, \xi^j_n\equiv 0, \atop \text{ or } |h_n^j\xi_n^j|\to \infty} e^{it^j_n\Delta^2}g^j_n[e^{i(\cdot)h^j_n\xi^j_n}\phi^j]+w^l_n,$$
 where $$\lim_{l\to\infty}\lim_{n\to\infty} \|D^{1/3}S(t)w_n^l\|_{L^6_{t,x}}=0.$$
Combining it with the orthogonality results in Remark \ref{re:profiles-orthogonal}, we obtain
\begin{align*}
\mathbf{S}^6&=\lim_{n\to\infty}\|D^{1/3}S(t)f_n\|^6_{L^6_{t,x}}
=\lim_{l\to\infty}\lim_{n\to\infty}\|\sum_{1\le j\le l}D^{1/3}S(t+t_n^j)g_n^j[e^{ixh_n^j\xi_n^j}\phi^j]\|^6_{L^6_{t,x}}\\
&=\lim_{l\to\infty}\lim_{n\to\infty}\sum_{1\le j\le l}\|D^{1/3}S(t)[e^{ixh_n^j\xi_n^j}\phi^j]\|^6_{L^6_{t,x}}\le \mathbf{S}^6\sum_{j=1}^\infty\|\phi^j\|^6_{L^2}\\
&\le \mathbf{S}^6\left(\sum_{j=1}^\infty \|\phi^j\|^2_{L^2}\right)^3\le \mathbf{S}^6.
\end{align*}
Then all inequalities will become equal above. In particular, by the inclusion of $\ell^3$ into $\ell^1$,
we see that there is only $j$ remains and
$$\|\phi^j\|_{L^2}=1,\quad \mathbf{S}=\lim_{n\to\infty}\|D^{1/3}S(t)[e^{ixh_n^j\xi_n^j}\phi^j]\|_{L^6_{t,x}}.$$ So we consider the following two cases after fixing this $j$.
\begin{itemize}
\item If $\xi_n^j\equiv 0$, then $\phi^j$ is an extremiser as desired.

\item If $\lim_{n\to\infty}h_n^j\xi_n^j=\infty$, we set $a_n:=h_n^j\xi_n^j$; then
\begin{equation}\label{eq-72}
\mathbf{S}=\lim_{n\to\infty}\|D^{1/3}S(t)[e^{ixa_n}\phi^j]\|_{L^6_{t,x}},\,\|\phi^j\|_{L^2}=1.
\end{equation}
\end{itemize}
This establishes the first half of Theorem \ref{thm:dichotomy}. The following proposition will complete its proof.
\begin{proposition}\label{prop:convg-to-schr}
For any $\phi\in L^2$, we have the following convergence,
\begin{equation}\label{eq-convg-to-schr}
\lim_{N\to\infty}\|D^{1/3}S(t)[e^{ixN}\phi]\|_{L^6_{t,x}} =\|e^{-it\Delta} \phi\|_{L^6_{t,x}}.
\end{equation}
\end{proposition}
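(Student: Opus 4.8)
The plan is to compute the symbol of $D^{1/3}S(t)$ acting on a modulated function $e^{ixN}\phi$ explicitly, extract the modulation factor which has unit modulus, and show that the remaining operator converges to the Schr\"odinger evolution (up to a time reflection) as $N\to\infty$. First I would write, for fixed $\phi$ with $\widehat\phi$ compactly supported (a dense class, so by the uniform Strichartz bound \eqref{eq:symmetric-strichartz} and density it suffices to treat this case),
\begin{equation*}
D^{1/3}S(t)[e^{ixN}\phi](x)=\int e^{ix\xi+it(\xi^4)}(6\xi^2)^{1/6}\widehat\phi(\xi-N)\,d\xi
= e^{ixN}\int e^{ix\eta+it\phi_0(\eta+N)}(6(\eta+N)^2)^{1/6}\widehat\phi(\eta)\,d\eta,
\end{equation*}
after the substitution $\xi=\eta+N$. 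Expanding $\phi_0(\eta+N)=(\eta+N)^4=N^4+4N^3\eta+6N^2\eta^2+4N\eta^3+\eta^4$, the terms $e^{itN^4}$ and $e^{i(x+4tN^3)N}$ combine into a unit-modulus modulation/translation that I can absorb by the change of variables $x\mapsto x-4tN^3$ in the $L^6_{t,x}$ norm; the factor $(6(\eta+N)^2)^{1/6}=(6N^2)^{1/6}(1+\eta/N)^{1/3}\to (6N^2)^{1/6}$ uniformly on the support of $\widehat\phi$.

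Next I would rescale to remove the growing constant $(6N^2)^{1/6}$ together with the $6N^2$ in the phase $6N^2\eta^2$. Setting $t=s/(6N^2)$ produces $e^{is\eta^2}$ in the phase and a Jacobian factor $(6N^2)^{-1/6}$ in the $L^6_t$ norm, which exactly cancels the $(6N^2)^{1/6}$ out front. After these reductions, the quantity $\|D^{1/3}S(t)[e^{ixN}\phi]\|_{L^6_{t,x}}$ equals, up to error terms,
\begin{equation*}
\Bigl\| \int e^{ix\eta+is\eta^2} e^{i(\text{lower order in }1/N)} \widehat\phi(\eta)\,d\eta \Bigr\|_{L^6_{s,x}},
\end{equation*}
where the residual phase is $\tfrac{1}{6N^2}\cdot(4N\eta^3+\eta^4)\cdot s\cdot 6N^2/(6N^2)$, i.e. it is $O(1/N)$ times bounded functions of $\eta$ on compact time intervals — wait, more carefully, after the rescaling $t=s/(6N^2)$ the term $4N\eta^3 t = \tfrac{4\eta^3}{6N}s = O(1/N)$ and $\eta^4 t = O(1/N^2)$, uniformly for $s$ in compact sets and $\eta$ in the support of $\widehat\phi$. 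The leading term $\int e^{ix\eta + is\eta^2}\widehat\phi(\eta)d\eta$ is precisely $(e^{-is\Delta}\phi)(x)$ in one dimension with a harmless sign convention (since $\widehat{e^{-is\Delta}\phi}(\eta) = e^{is|\eta|^2}\widehat\phi(\eta)$), which after a further reflection $s\mapsto -s$ in the $L^6$-norm gives $\|e^{-it\Delta}\phi\|_{L^6_{t,x}}$ as claimed.

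The main obstacle is controlling the convergence uniformly in the time variable: the residual phases are $O(1/N)$ only on compact $s$-intervals, and the Schr\"odinger Strichartz norm involves integration over all of $\mathbb R$ in time. I would handle this by a two-part argument. For $|s|\le T$: on this region the residual phase tends to $0$ uniformly, so dominated convergence (together with the pointwise bound $|\int e^{ix\eta+is\eta^2+\ldots}\widehat\phi\,d\eta|\lesssim \min(1, |s|^{-1/2})\|\widehat\phi\|_{L^1}$ from van der Corput, which is $L^6_{s,x}$-integrable for the tail but here only needs to give an $L^6$-dominating function on the compact piece times bounded $x$-support decay) gives convergence of the truncated norms. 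For $|s|>T$: I would use the stationary-phase/dispersive decay bound $\lesssim |s|^{-1/2}$ for both the rescaled fourth-order profile and the Schr\"odinger profile — these are uniform in $N$ and in $\phi$ once $\widehat\phi$ is fixed and compactly supported — to show the $L^6_{s,x}$-mass on $\{|s|>T\}$ is $\lesssim T^{-c}$ uniformly in $N$, hence negligible as $T\to\infty$. Combining the two pieces and then letting $N\to\infty$ followed by $T\to\infty$ yields \eqref{eq-convg-to-schr}. Finally, the reduction to $\widehat\phi\in C_c^\infty$ is justified because both sides of \eqref{eq-convg-to-schr} are continuous in $\phi\in L^2$: the right side by the standard Schr\"odinger Strichartz inequality, and the left side, uniformly in $N$, by \eqref{eq:symmetric-strichartz}.
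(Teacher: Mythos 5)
Your algebraic reductions --- the substitution $\xi=\eta+N$, the absorption of the unimodular factor $e^{itN^4+ixN}$ together with the spatial translation $x\mapsto x-4tN^3$, and the time rescaling $t=s/(6N^2)$ that cancels the $(6N^2)^{1/6}$ coming from the symbol against the Jacobian --- are exactly those in the paper's display \eqref{eq-77}, and you correctly identify the pointwise limit. (The reflection $s\mapsto -s$ you mention is unnecessary: $\int e^{ix\eta+is\eta^2}\widehat\phi\,d\eta$ is already $e^{-is\Delta}\phi$ with the paper's sign conventions, and in any case it is harmless since the $L^6_{s,x}$-norm is even in $s$.) The place where your writeup has a genuine gap is in promoting the pointwise limit to a limit of $L^6_{t,x}$-norms. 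Your assertion that the uniform bound $\min(1,|s|^{-1/2})\|\widehat\phi\|_{L^1}$ is ``$L^6_{s,x}$-integrable'' is false as stated: it carries no decay in $x$ whatsoever, and the phrase ``times bounded $x$-support decay'' does not supply the missing ingredient. Without control of the $x$-integration, neither the dominated-convergence step on $|s|\le T$ nor the $T^{-c}$ estimate on the tail goes through. The paper closes exactly this gap by also proving, via non-stationary phase in $\eta$, a uniform-in-$N$ spatial bound $|I_N(t,x)|\lesssim_\phi (1+|x|)^{-1}$ on the region where $|x|$ and $|t|$ are not comparable, and then geometrically interpolating the two bounds to manufacture explicit dominating functions $F_1,F_2\in L^6_{t,x}$.

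A repair closer to your own time-truncation strategy, avoiding the paper's non-stationary phase in $x$: by Plancherel, $\|I_N(s,\cdot)\|_{L^2_x}\lesssim\|\widehat\phi\|_{L^2}$ uniformly in $N$ and $s$, so interpolating with your $L^\infty_x$ bound gives $\|I_N(s,\cdot)\|_{L^6_x}\le\|I_N(s,\cdot)\|_{L^\infty_x}^{2/3}\|I_N(s,\cdot)\|_{L^2_x}^{1/3}\lesssim_\phi \min(1,|s|)^{-1/3}$; this yields tail mass $\int_{|s|>T}|s|^{-2}\,ds\lesssim T^{-1}$ uniformly in $N$. On $|s|\le T$, both $\|I_N-I_\infty\|_{L^\infty_x}$ (by the uniform smallness of the residual phase) and $\|I_N-I_\infty\|_{L^2_x}$ (by Plancherel and dominated convergence on the Fourier side) tend to $0$ uniformly in $s\in[-T,T]$, so the same interpolation gives $\|I_N-I_\infty\|_{L^6_{s,x}([-T,T]\times\mathbb{R})}\to 0$. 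Either this interpolation route or the paper's explicit dominating function is needed; as written, your sketch leaves the $x$-variable uncontrolled.
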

Let us postpone the proof of this proposition and continue the proof for Theorem \ref{thm:dichotomy}. On the one hand, by applying Proposition \ref{prop:convg-to-schr},
\begin{equation}\label{eq-73}
\mathbf{S}=\lim_{n\to\infty}\|D^{1/3}S(t)[e^{ixa_n}\phi^j]\|_{L^6_{t,x}}=\|e^{-it\Delta} \phi^j\|_{L^6_{t,x}}\le \mathbf{S}_{schr} \|\phi^j\|_{L^2}=\mathbf{S}_{schr}.
\end{equation}
On the other hand, by the works of Foschi \cite{Foschi:2007:maxi-strichartz-2d}, Hundertmark, Zharnitsky \cite{Hundertmark-Zharnitsky:2006:maximizers-Strichartz-low-dimensions} and Bennett Bez, Carbery, Hundertmark \cite{Bennett-Bez-Carbery-Hundertmark:2008:heat-flow-of-strichartz-norm}, we know that $\phi_0=e^{-|x|^2}$ is an extremal for $\mathbf{S}_{schr}$. Let $\phi=\phi_0$ in \eqref{eq-convg-to-schr}, we see that
\begin{equation}\label{eq-74}
\mathbf{S}_{schr}=\frac {\|e^{-it\Delta} \phi_0\|_{L^6_{t,x}}}{\|\phi_0\|_{L^2}}=\lim_{N\to \infty}\frac {\|D^{1/3}S(t)[e^{ixN}\phi_0]\|_{L^6_{t,x}}}{\|\phi_0\|_{L^2}}\le \mathbf{S}
\end{equation} by the definition of $\mathbf{S}$. Returning to \eqref{eq-73}, we see that
\begin{equation}
\mathbf{S}=\|e^{-it\Delta} \phi^j\|_{L^6_{t,x}}\le \mathbf{S}_{schr} \|\phi^j\|_{L^2}=\mathbf{S}_{schr}\le \mathbf{S}.
\end{equation}So this forces all inequality signs to be equal. In particular, we have
\begin{align}
\label{eq-75} \mathbf{S}&=\mathbf{S}_{schr},\\
\label{eq-76}  \|e^{-it\Delta} \phi^j\|_{L^6_{t,x}}&=\mathbf{S}_{schr} \|\phi^j\|_{L^2}.
\end{align}
In other words, \eqref{eq-76} says that $\phi^j$ is an extremal for the Strichartz inequality for the Schr\"odinger equation.  From the works of Foschi \cite{Foschi:2007:maxi-strichartz-2d}, Hundertmark, Zharnitsky \cite{Hundertmark-Zharnitsky:2006:maximizers-Strichartz-low-dimensions}, this information implies that $\phi^j$ is a Gaussian up to the natural symmetries associated to the Strichartz inequality for the Schr\"odinger equation. This finishes the proof of Theorem \ref{thm:dichotomy}.
\end{proof}

Now we present a proof for Proposition \ref{prop:convg-to-schr}.
\begin{proof}[Proof of Proposition \ref{prop:convg-to-schr}.] We may assume $\phi\in \mathcal{S}$ with compact Fourier support in $(-1,1)$, where $\mathcal{S}$ denotes the collection of Schwartz functions on $\mathbb{R}$. Then by a change of variables,
\begin{equation}\label{eq-77}
\begin{split}
\|D^{1/3}S(t)[e^{ixN}\phi]\|_{L^6_{t,x}}&=6^{1/6}\left\|\int e^{i\eta (x+4tN^3)+i\eta^2 6tN^2+i\eta^34tN+it\eta^4}|\eta+N|^{1/3} \widehat\phi(\eta)d\eta\right\|_{L^6_{t,x}}\\
&=\left\|\int e^{ix\eta+it\eta^2+it\eta^3\frac{2}{3N} +it\eta^4\frac {1}{6N^2}}|\frac \eta N+1|^{1/3} \widehat\phi(\eta)d\eta\right\|_{L^6_{t,x}}.
\end{split}
\end{equation}
Then the assertion in Proposition \ref{prop:convg-to-schr} reduces to
\begin{equation}\label{eq-78}
\lim_{N\to\infty}\left\|\int e^{ix\eta+it\eta^2+it\eta^3\frac{2}{3N} +it\eta^4\frac {1}{6N^2}}|\frac \eta N+1|^{1/3} \widehat\phi(\eta)d\eta\right\|_{L^6_{t,x}}=\|e^{-it\Delta} \phi\|_{L^6_{t,x}}.
\end{equation} This follows from the dominated convergence theorem.  Indeed, there holds that
\begin{equation*}
\int e^{ix\eta+it\eta^2+it\eta^3\frac{2}{3N} +it\eta^4\frac {1}{6N^2}}|\frac \eta N+1|^{1/3} \widehat\phi(\eta)d\eta
\to \int e^{ix\eta +it\eta^2 }\widehat{\phi} d\eta =e^{-it\Delta}\phi(x)
\end{equation*} for almost everywhere $(t,x)$ as $N$ goes to infinity. On the other hand, let
$$I(t,x):=\left|\int e^{ix\eta+it\eta^2+it\eta^3\frac{2}{3N} +it\eta^4\frac {1}{6N^2}}|\frac \eta N+1|^{1/3} \widehat\phi(\eta)d\eta \right|.$$ We aim to find a dominating function for $I(t,x)$. Since $|\eta|\le 1$, there exists $N_0>0$ such that
$$ 2\left|t(1+\eta\frac{2}{N} +\eta^2\frac {1}{N^2})\right|\ge c_0>0, \text{ for all } N\ge N_0,$$
where $c_0>0$ is an universal constant. Then the stationary phase estimate (see e.g., \cite[Chapter 8, p.334]{Stein:1993}) implies that, there always holds that
\begin{equation}\label{eq-79}
I(t,x)\le \frac {C_\phi}{(1+|t|)^{1/2}}
\end{equation}
for all $x\in \mathbb{R}_x$ and for all $N\ge N_0$. Fixing $t\in \mathbb{R}_t$, we split $\mathbb{R}_x$ into two parts,
\begin{align*}
\Omega(t)=\{x\in \mathbb{R}: \bigl||x|-3|t|\bigr|\le \frac {|x|}{2}\}, \text{ and } \mathbb{R}\setminus \Omega(t).
\end{align*}
On $\mathbb{R}\times \Omega(t)$, $|t|\sim |x|$ for $N\ge N_0$ and hence the dominating function can be chose as
$$F_1(t,x):= \frac {C_\phi}{(1+|t|)^{1/4}(1+|x|)^{1/4}}.$$
However on $\mathbb{R}\times \bigl(\mathbb{R}\setminus \Omega(t)\bigr)$, for each fixed $t$, we have
$$\bigl||x|-3|t|\bigr|\ge \frac {|x|}{2}.$$
Hence for all $ N\ge N_0$, the phase in $I(t,x)$ is non-stationary. This implies that
\begin{equation}\label{eq-80}
I(t,x)\le \frac {C_\phi}{1+|x|}.
\end{equation}
So on $\mathbb{R}\times \bigl(\mathbb{R}\setminus \Omega(t)\bigr)$, we combine the two upper bounds in \eqref{eq-79}, \eqref{eq-80}, and choose the dominating function to be
$$F_2(t,x)=\frac {C_\phi}{(1+|t|)^{1/4}(1+|x|)^{1/2}}.$$
Note that $F_1$ and $F_2$ are in $L^6_{t,x}$ for all $N\ge N_0$, which serve as dominating functions. Therefore we finish the proof of this proposition.
\end{proof}


\begin{thebibliography}{10}

\bibitem{Bahouri-Gerard:1999:profile-wave}
H.~Bahouri and P.~G{\'e}rard.
\newblock High frequency approximation of solutions to critical nonlinear wave
  equations.
\newblock {\em Amer. J. Math.}, 121(1):131--175, 1999.

\bibitem{Begout-Vargas:2007:profile-schrod-higher-d}
P.~B{\'e}gout and A.~Vargas.
\newblock Mass concentration phenomena for the {$L\sp 2$}-critical nonlinear
  {S}chr\"odinger equation.
\newblock {\em Trans. Amer. Math. Soc.}, 359(11):5257--5282, 2007.

\bibitem{BenArtzi-Koch-Saut:2000:dispersive-estimate-for-4th-NLS}
M.~Ben-Artzi, H.~Koch, and J.~Saut.
\newblock Dispersion estimates for fourth order {S}chr\"odinger equations.
\newblock {\em C. R. Acad. Sci. Paris S\'er. I Math.}, 330(2):87--92, 2000.

\bibitem{Bennett-Bez-Carbery-Hundertmark:2008:heat-flow-of-strichartz-norm}
J.~Bennett, N.~Bez, A.~Carbery, and D.~Hundertmark.
\newblock Heat-flow monotonicity of {S}trichartz norms.
\newblock {\em Analysis and PDE, Vol. 2 (2009), No. 2, 147--158}.

\bibitem{Bourgain:1999:radial-NLS}
J.~Bourgain.
\newblock Global wellposedness of defocusing critical nonlinear {S}chr\"odinger
  equation in the radial case.
\newblock {\em J. Amer. Math. Soc.}, 12(1):145--171, 1999.

\bibitem{Carles-Keraani:2007:profile-schrod-1d}
R.~Carles and S.~Keraani.
\newblock On the role of quadratic oscillations in nonlinear {S}chr\"odinger
  equations. {II}. {T}he {$L\sp 2$}-critical case.
\newblock {\em Trans. Amer. Math. Soc.}, 359(1):33--62 (electronic), 2007.

\bibitem{Carneiro:2008:sharp-strichartz-norm}
E.~Carneiro.
\newblock A sharp inequality for the {S}trichartz norm.
\newblock {\em arXiv:0809.4054}.

\bibitem{Christ-Colliander-Tao:2003:asymptotics-modulation-canonical-defocusin%
g-eqs}
M.~Christ, J.~Colliander, and T.~Tao.
\newblock Asymptotics, frequency modulation, and low regularity ill-posedness
  for canonical defocusing equations.
\newblock {\em Amer. J. Math.}, 125(6):1235--1293, 2003.

\bibitem{I-teem:2008:GWP-for-energy-critical-NLS-in-R3}
J.~Colliander, M.~Keel, G.~Staffilani, H.~Takaoka, and T.~Tao.
\newblock Global well-posedness and scattering for the energy-critical
  nonlinear {S}chr\"odinger equation in {$\Bbb R\sp 3$}.
\newblock {\em Ann. of Math. (2)}, 167(3):767--865, 2008.

\bibitem{Foschi:2007:maxi-strichartz-2d}
D.~Foschi.
\newblock Maximizers for the {S}trichartz inequality.
\newblock {\em J. Eur. Math. Soc. (JEMS)}, 9(4):739--774, 2007.

\bibitem{Hundertmark-Zharnitsky:2006:maximizers-Strichartz-low-dimensions}
D.~Hundertmark and V.~Zharnitsky.
\newblock On sharp {S}trichartz inequalities in low dimensions.
\newblock {\em Int. Math. Res. Not.}, pages Art. ID 34080, 18, 2006.

\bibitem{Karpmam-Shagalov:2000}
V.~I. Karpman and A.~G. Shagalov.
\newblock Stability of solitons described by nonlinear {S}chr\"odinger-type
  equations with higher-order dispersion.
\newblock {\em Phys. D}, 144(1-2):194--210, 2000.

\bibitem{Karpman:1996}
V.I. Karpman.
\newblock Stabilization of soliton instabilities by higher-order dispersion:
  fourth order nonlinear schr\"{o}dinger-type equations.

\bibitem{Kenig-Merle:2006:focusing-energy-NLS-radial}
C.~Kenig and F.~Merle.
\newblock Global well-posedness, scattering and blow-up for the
  energy-critical, focusing, non-linear {S}chr\"odinger equation in the radial
  case.
\newblock {\em Invent. Math.}, 166(3):645--675, 2006.

\bibitem{Kenig-Merle:2007:focusing-energy-nonlinear-Wave}
C.~Kenig and F.~Merle.
\newblock On the energy critical focusing non-linear wave equation.
\newblock In {\em S\'eminaire: \'{E}quations aux {D}\'eriv\'ees {P}artielles.
  2006--2007}, S\'emin. \'Equ. D\'eriv. Partielles, pages Exp. No. V, 14.
  \'Ecole Polytech., Palaiseau, 2007.

\bibitem{Kenig-Merle:2008:focusing-energy-nonlinear-Wave}
C.~Kenig and F.~Merle.
\newblock Global well-posedness, scattering and blow-up for the energy-critical
  focusing non-linear wave equation.
\newblock {\em Acta Math.}, 201(2):147--212, 2008.

\bibitem{Kenig-Ponce-Vega:1991:dispersive-estimates}
C.~Kenig, G.~Ponce, and L.~Vega.
\newblock Oscillatory integrals and regularity of dispersive equations.
\newblock {\em Indiana Univ. Math. J.}, 40(1):33--69, 1991.

\bibitem{Keraani:2001:profile-schrod-H^1}
S.~Keraani.
\newblock On the defect of compactness for the {S}trichartz estimates of the
  {S}chr\"odinger equations.
\newblock {\em J. Differential Equations}, 175(2):353--392, 2001.

\bibitem{Killip-Kwon-Shao-Visan:2008:minimal-mass-blow-up-solution-to-critical%
-gKdV}
R.~Killip, S.~Kwon, S.~Shao, and M.~Visan.
\newblock On the mass-critical generalized {KdV} equation.
\newblock {\em arXiv:0907.5412}.

\bibitem{Killip-Tao-Visan:2008:cubic-NLS-radial}
R.~Killip, T.~Tao, and M.~Visan.
\newblock The cubic nonlinear {S}chr\"odinger equations in two dimension with
  radial data.
\newblock {\em math.AP/0708.0849}.

\bibitem{Killip-Visan:2008:focusing-energy-critical-NLS-higher-d}
R.~Killip and M.~Visan.
\newblock The focusing energy-critical nonlinear {S}chr\"odinger equation in
  dimensions five and higher.
\newblock {\em arXiv:0804.1018}.

\bibitem{Killip-Visan:2008:clay-lecture-notes}
R.~Killip and M.~Visan.
\newblock Nonlinear {S}chr\"odinger equations at critical regularity.
\newblock {\em Lecture notes for the summer school of Clay Mathematics
  Institute, 2008.}

\bibitem{Killip-Visan-Zhang:2008:radial-NLS-hihger-d}
R.~Killip, M.~Visan, and X.~Zhang.
\newblock The mass-critical nonlinear {S}chr\"odinger equations with radial
  data in dimensions three and higher.
\newblock {\em math.AP/0708.0849}.

\bibitem{Kunze:2003:maxi-strichartz-1d}
M.~Kunze.
\newblock On the existence of a maximizer for the {S}trichartz inequality.
\newblock {\em Comm. Math. Phys.}, 243(1):137--162, 2003.

\bibitem{Merle-Vega:1998:profile-schrod}
F.~Merle and L.~Vega.
\newblock Compactness at blow-up time for {$L\sp 2$} solutions of the critical
  nonlinear {S}chr\"odinger equation in 2{D}.
\newblock {\em Internat. Math. Res. Notices}, (8):399--425, 1998.

\bibitem{Pausader:2009:cubic-4th-NLS}
B.~Pausader.
\newblock The cubic fourth-order {S}chr\"odinger equation.
\newblock {\em J. Funct. Anal.}, 256(8):2473--2517, 2009.

\bibitem{Pausader:2009:focusing-4th-NLS-radial}
B.~Pausader.
\newblock The focusing energy-critical fourth-order {S}chr\"odinger equation
  with radial data.
\newblock {\em Discrete Contin. Dyn. Syst.}, 24(4):1275--1292, 2009.

\bibitem{Pausader-Shao:2009:GWP-L2-critical-4NLS-high-dimensions}
B.~Pausader and S.~Shao.
\newblock The mass critical fourth-order schr\"odinger equation in high
  dimensions.
\newblock {\em arXiv:0906.154}.

\bibitem{Shao:2008:linear-profile-Airy-Maximizer-Airy-Strichartz}
S.~Shao.
\newblock The linear profile decomposition for the {A}iry equation and the
  existence of maximizers for the {A}iry {S}trichartz inequality.
\newblock {\em Analysis and PDE, Vol. 2 (2009), No. 1, 83–-117}.

\bibitem{Shao:2008:maximizers-Strichartz-Sobolev-Strichartz}
S.~Shao.
\newblock Maximizers for the {S}trichartz inequalities and the
  {S}obolev-{S}trichartz inequalities for the {S}chr\"odinger equation.
\newblock {\em Electronic Journal of Differential Equations, Vol. 2009(2009),
  No. 03, pp. 1-13.}

\bibitem{Sogge:1993:fourier-analysis-book}
C.~Sogge.
\newblock {\em Fourier integrals in classical analysis}, volume 105 of {\em
  Cambridge Tracts in Mathematics}.
\newblock Cambridge University Press, Cambridge, 1993.

\bibitem{Stein:1993}
E.~M. Stein.
\newblock {\em Harmonic analysis: real-variable methods, orthogonality, and
  oscillatory integrals}, volume~43 of {\em Princeton Mathematical Series}.
\newblock Princeton University Press, Princeton, NJ, 1993.
\newblock With the assistance of Timothy S. Murphy, Monographs in Harmonic
  Analysis, III.

\bibitem{Tao:2007:two-remarks-on-gKdV}
T.~Tao.
\newblock Two remarks on the generalised {K}orteweg-de {V}ries equation.
\newblock {\em Discrete Contin. Dyn. Syst.}, 18(1):1--14, 2007.

\bibitem{Tao-Visan-Zhang:2007:radial-NLS-higher}
T.~Tao, M.~Visan, and X.~Zhang.
\newblock Global well-posedness and scattering for the defocusing mass-critical
  nonlinear {S}chr\"odinger equation for radial data in high dimensions.
\newblock {\em Duke Math. J.}, 140(1):165--202, 2007.

\end{thebibliography}
\end{document}